\newtheorem{theorem}{Theorem}
\newtheorem{proposition}[theorem]{Proposition}
\newtheorem{lemma}[theorem]{Lemma}
\newtheorem{definition}{Definition}
\newtheorem{corollary}{Corollary}
\newtheorem{example}{Example}
\newtheorem{remark}{Remark}
\newtheorem{conjecture}{Conjecture}
\newcommand{\bC}{\mathbb{C}}
\newcommand{\ee}{\end{equation}}
\newcommand {\cC} {\mathcal C}
\newtheorem{CONJ}{Conjecture}
\begin{document}
          \numberwithin{equation}{section}

          \title[Around a conjecture of K.~Tran]
          {Around a conjecture of K.~Tran}

          \author[I.~Ndikubwayo]{Innocent Ndikubwayo}
\address{Department of Mathematics, Stockholm University, SE-106 91
Stockholm,         Sweden}
\email {innocent@math.su.se}

\address{Department of Mathematics, Makerere University, 7062 Kampala, Uganda}

\email {ndikubwayo@cns.mak.ac.ug}

\keywords{ recurrence relation, \textit{q}-discriminant, generating function} 
\subjclass[2010]{Primary 12D10,\; Secondary   26C10, 30C15}

\begin{abstract} 

We study the root distribution of a sequence of polynomials $\{P_n(z)\}_{n=0}^{\infty}$  with the rational generating function $$ \sum_{n=0}^{\infty} P_n(z)t^n= \frac{1}{1+ B(z)t^\ell +A(z)t^k}$$ for $(k,\ell)=(3,2)$ and $(4,3)$  where $A(z)$ and $B(z)$ are arbitrary polynomials in $z$ with complex coefficients. We show that the zeros of $P_n(z)$ which satisfy $A(z)B(z)\neq 0$ lie on a   real algebraic curve which we describe explicitly.


\end{abstract}

\maketitle

\section{Introduction}
The study of zeros of sequences of polynomials plays an important role in many areas of mathematics such as analysis, probability theory, combinatorics and geometry. In this article, we study the distribution of zeros of polynomials in a polynomial sequence  $\{P_n\}_{n=0}^{\infty}$ generated by a certain type of three-term recurrence relation. Such polynomials are of  interest due to several remarkable properties they possess. For example, three-term recurrence relations are useful in numerical mathematics for producing sequences of orthogonal polynomials.
\medskip
Let $Q_1(z),Q_2(z) \ldots, Q_k(z)$ be fixed complex polynomials and $\{P_n(z)\}_{n=0}^{\infty}$ be the sequence of polynomials generated by recurrence relation of the form:  
\begin{equation} \label{eqn1}
P_n(z)+ Q_1(z)P_{n-1}(z)+ Q_2(z)P_{n-2}(z)+ \dots + Q_k(z)P_{n-k}(z)=0,~~~~~~~n=1,2,\dots
\end{equation}
subject to certain initial conditions. 

The problem of describing the location of the zeros of polynomials in the sequence $\{P_n(z)\}_{n=0}^{\infty}$ might have two different versions. The first is asymptotic and it aims at finding the limiting curve for the zeros of $P_n(z)$ as $ n \to  \infty$. This is the approach taken in the papers \cite{KK, KK2, BG, Boy}. The second is of exact type; it aims at finding the curve where all the zeros of $P_n(z)$ lie for all $n$ (or at least for all large $n$), for example \cite{T, TI}. For general recurrence relations, such curves do not exist. However, for three-term recurrences with $2 \leqslant k \leqslant 5$ and appropriate initial conditions, such a curve containing all the zeros of $P_n(z)$ exists and is given in  \cite{T}. More generally in \cite[Conjecture 6]{T}, K. Tran conjectured existence of such a curve for all $k \geqslant 6$. Below we reformulate Tran's Conjecture.

\begin{CONJ}\label{conj:Tran} For an arbitrary pair of complex polynomials $A(z)$ and $B(z)$, every zero of every polynomial in the sequence $\{P_n(z)\}_{n=1}^\infty$ satisfying the three-term recurrence relation of length $k$
$$P_n(z)+B(z)P_{n-1}(z)+A(z)P_{n-k}=0 \quad \mbox{for} \quad n=1,2,\dots
$$
with the standard initial conditions $P_0(z)=1$, $P_{-1}(z)=\dots=P_{1-k}(z)=0$ which is  not a zero of  $A(z)$ lies on the portion of the real algebraic  curve  $\cC\subset \bC$ given by 
$$\Im \left(\frac{B^k(z)}{A(z)}\right)=0\quad {\rm and}\quad 0\le (-1)^k\Re \left(\frac{B^k(z)}{A(z)}\right)\le \frac{k^k}{(k-1)^{k-1}}.$$
\end{CONJ}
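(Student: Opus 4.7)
The plan is to reduce Tran's Conjecture to a univariate statement about a sequence of universal polynomials $q_n(\alpha)$ in one variable, and then to attack that statement via a partial-fraction representation together with an inductive argument.

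Since $P_n(z)$ is quasi-homogeneous in $A(z),B(z)$ of weight $n$ (with $\deg B=1$ and $\deg A=k$), substituting $t\mapsto -t/B$ in the generating function collapses it to
\[
\sum_{n\ge 0}\frac{P_n(z)}{(-B(z))^n}\,t^n\;=\;\frac{\alpha}{t^k-\alpha t+\alpha},\qquad \alpha\;:=\;(-1)^k\,\frac{B(z)^k}{A(z)}.
\]
Hence $P_n(z)=(-B)^n p_n(\alpha)$ for a \emph{universal} rational function $p_n$ depending only on $n$ and $k$, determined by the scalar recursion $p_n=p_{n-1}-p_{n-k}/\alpha$ with $p_0=\cdots=p_{k-1}=1$. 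Clearing denominators produces honest polynomials $q_n(\alpha):=\alpha^{\lfloor n/k\rfloor}p_n(\alpha)$ satisfying $q_n=q_{n-1}-q_{n-k}$ for $k\nmid n$ and $q_n=\alpha\,q_{n-1}-q_{n-k}$ for $k\mid n$. Away from $A(z)B(z)=0$ the zeros of $P_n$ in $z$ are precisely the preimages under $z\mapsto\alpha(z)$ of the zeros of $q_n$ in $\alpha$, so the conjecture reduces to the assertion that every root of every $q_n$ is real and lies in $[0,\,k^k/(k-1)^{k-1}]$.

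The key analytic input is the partial-fraction identity
\[
p_n(\alpha)\;=\;\sum_{i=1}^k \frac{1}{s_i(\alpha)^{\,n}\,\bigl(k-(k-1)s_i(\alpha)\bigr)},
\]
where $s_1(\alpha),\ldots,s_k(\alpha)$ are the roots of $g_\alpha(s):=s^k-\alpha s+\alpha$. The upper endpoint $\alpha=k^k/(k-1)^{k-1}$ is \emph{exactly} where $g_\alpha$ acquires a double root at $s=k/(k-1)$, which is simultaneously the unique zero of the weight factor $k-(k-1)s$; the lower endpoint $\alpha=0$ is where all roots of $g_\alpha$ coalesce at the origin. The target interval thus coincides with the locus in $\alpha$ where the root configuration of $g_\alpha$ is degenerate or admits coinciding moduli, and outside this locus one expects a unique root of strictly smallest modulus to dominate the partial-fraction sum, preventing vanishing.

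The proof then has two symmetric halves. For $\alpha\in\bC$ outside the closed target interval, one shows $p_n(\alpha)\neq 0$ by turning the dominance of the smallest-modulus root into a quantitative non-vanishing estimate, most naturally via a saddle-point analysis of the contour integral
\[
p_n(\alpha)\;=\;\frac{\alpha}{2\pi i}\oint_{|t|=\epsilon}\frac{dt}{t^{n+1}\,g_\alpha(t)},
\]
deforming the contour to the steepest-descent path attached to that root. For $\alpha$ inside the interval, one uses the recursion for $q_n$ to propagate an interlacing-and-reality statement inductively, in the spirit of the classical theory of orthogonal polynomials, with the signs of $q_n$ at the endpoints $\alpha=0$ and $\alpha=k^k/(k-1)^{k-1}$ serving as base case. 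The main obstacle is upgrading the easy asymptotic dominance into non-vanishing valid for every finite $n$ while simultaneously keeping the interlacing induction synchronised with the $k$-step gap of the recursion: Tran completes the analogous task for $k\le 5$ by explicit resultant and discriminant computations that do not generalise, and for arbitrary $k$ the decisive step is likely a uniform saddle-point representation controlled by the geometry of the map $s\mapsto s^k/(s-1)$, whose critical value is precisely $k^k/(k-1)^{k-1}$.
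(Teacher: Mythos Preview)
The paper does not prove this statement: it is recorded there as Conjecture~A and left open. The paper only reports that Tran settled the cases $k\le 5$ in \cite{T} and obtained an asymptotic version (all sufficiently large $n$) for general $k$ in \cite{TI}; the full conjecture for $k\ge 6$ remains unresolved. So there is no ``paper's own proof'' to compare against.

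Your reduction is correct and useful. The substitution $t\mapsto -t/B$ does collapse the problem to the one-parameter family governed by $g_\alpha(s)=s^k-\alpha s+\alpha$, the partial-fraction identity for $p_n$ is right (including the weight $k-(k-1)s_i$ coming from $g_\alpha'(s_i)$), and the identification of the endpoint $\alpha=k^k/(k-1)^{k-1}$ with the critical value of $s\mapsto s^k/(s-1)$ is exactly the right geometric picture. All of this is standard and matches what Tran does in \cite{T,TI}.

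The gap is that after the reduction you have a plan, not a proof. Both halves are left at the level of heuristics: for $\alpha$ off the interval you say one ``shows $p_n(\alpha)\neq 0$'' by a saddle-point estimate, and for $\alpha$ on the interval you say one ``propagates an interlacing-and-reality statement inductively.'' Neither step is carried out, and you yourself flag the difficulty in the last paragraph: the asymptotic dominance of the smallest-modulus root of $g_\alpha$ is easy, but upgrading it to non-vanishing for \emph{every} $n$ is precisely the obstacle that has kept the conjecture open for $k\ge 6$. Tran's proofs for $k\le 5$ proceed by explicit algebraic identities tailored to each $k$, and no uniform argument of the kind you sketch is currently known. As written, this is a reasonable research outline but not a proof; the ``decisive step'' you mention is the entire content of the open problem.
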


The first part of the  above conjecture explicitly defines the real algebraic curve on which all the zeros of the polynomials $P_n(z)$ (except the zeros shared with $A(z)$) are located. The second part describes the exact portion on this curve where these zeros lie. In \cite{T}, Conjecture~\ref{conj:Tran} was settled for $k\le 5$. In a subsequent paper \cite{TI}, K. Tran  proved existence of such a curve containing all the zeros of $P_n(z)$ for sufficiently large $n$ and arbitrary $k$. More supporting results can be found in \cite{Fr}. 
 Additionally, a criterion for the reality of all the zeros of every polynomial in the sequence $\{P_n(z)\}_{n=0}^\infty$ for $k=2$ can be found in \cite{Nd}. 

\medskip
Based on  numerical experiments,  B.~Shapiro has formulated the following generalization of  Conjecture~\ref{conj:Tran} (private communication).

\begin{conjecture} \label{conj:Shapiro} For an arbitrary pair of complex polynomials $A(z)$ and $B(z)$, every zero of every polynomial in the sequence $\{P_n(z)\}_{n=0}^\infty$ satisfying the three-term recurrence relation of length $k$
\begin{align}\label{asb}
P_n(z)+B(z)P_{n-\ell}(z)+A(z)P_{n-k}(z)=0 \quad \mbox{for} \quad n=1,2,\ldots
\end{align}
with the standard initial conditions $P_0(z)=1$, $P_{-1}(z)=\dots=P_{1-k}(z)=0$ and coprime $1\le \ell <k$ which is  not a zero of $A(z)$ or $B(z)$ 
lies on the real algebraic  curve  $\cC\subset \bC$ given by 
$$\Im \left(\frac{B^k(z)}{A^\ell(z)}\right)=0.$$
\end{conjecture}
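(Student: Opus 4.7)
The plan is to extend K.~Tran's framework from~\cite{T} to the shifted recurrence~\eqref{asb}. Let $\tau_1(z), \ldots, \tau_k(z)$ denote the roots in $t$ of the denominator $1 + B(z) t^\ell + A(z) t^k$. At any $z_0$ with $A(z_0) \ne 0$ and the $\tau_i(z_0)$ pairwise distinct, a partial-fraction decomposition of the generating function gives
\[
P_n(z) = \sum_{i=1}^{k} \frac{c_i(z)}{\tau_i(z)^{n}}, \qquad c_i(z) = \prod_{j \ne i} \frac{\tau_j(z)}{\tau_j(z) - \tau_i(z)}.
\]
Because the polynomial $A(z) t^k + B(z) t^\ell + 1$ has only three nonzero coefficients, its elementary symmetric functions satisfy $e_j(\tau_1,\ldots,\tau_k) = 0$ for $j \notin \{0, k-\ell, k\}$, while $e_{k-\ell}(\tau) = (-1)^{k-\ell} B/A$ and $\tau_1 \cdots \tau_k = (-1)^k/A$. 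A direct manipulation then yields the key identity
\[
\frac{B(z)^k}{A(z)^\ell} \;=\; \frac{e_{k-\ell}(\tau_1(z), \ldots, \tau_k(z))^k}{\bigl(\tau_1(z) \cdots \tau_k(z)\bigr)^{k-\ell}},
\]
which converts the desired reality statement into a question about the configuration of the $\tau_i(z_0)$.

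Second, I would argue that any zero $z_0$ of $P_n$ with $A(z_0) B(z_0) \ne 0$ forces two of the $\tau_i(z_0)$ to share their modulus. Reordering so $|\tau_1| \le \cdots \le |\tau_k|$, a strict inequality $|\tau_1(z_0)| < |\tau_2(z_0)|$ would give $\tau_1(z_0)^{n} P_n(z_0) \to c_1(z_0)$ as $n \to \infty$, forcing $c_1(z_0) = 0$; but the explicit formula for $c_1$ shows this cannot happen when $A(z_0) \ne 0$ and the $\tau_j(z_0)$ are distinct, since $c_1$ vanishes only if some $\tau_j(z_0) = 0$, which is excluded by $\tau_1 \cdots \tau_k = (-1)^k / A(z_0) \ne 0$. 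Once two roots share their modulus, I would parametrise $\tau_1 = r e^{i\alpha}$, $\tau_2 = r e^{i\beta}$, use the vanishing of the middle elementary symmetric functions to pin down $\tau_3(z_0), \ldots, \tau_k(z_0)$ in terms of $r, \alpha, \beta$, and substitute into the identity above; the overall phase (an appropriate power of $e^{i(\alpha+\beta)/2}$) cancels, leaving $B(z_0)^k / A(z_0)^\ell$ real. For $(k,\ell) = (3,2)$ and $(4,3)$ this phase cancellation reduces to a short trigonometric identity, which is where the paper's restriction to these two cases enters.

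The main obstacle is the passage from an asymptotic matching-moduli argument (valid only for $n$ sufficiently large) to the exact ``every $n$'' statement in the conjecture, together with a careful treatment of the degenerate loci where the $\tau_i(z_0)$ collide or where $A(z_0) B(z_0) = 0$ and the partial-fraction setup breaks down. A secondary difficulty for general $(k,\ell)$ is combinatorial: as $k-\ell$ and $\ell$ grow, the Vieta system admits increasingly rich families of configurations compatible with $|\tau_i(z_0)| = |\tau_j(z_0)|$ (possibly with three or more $\tau$'s tied for minimum modulus), and verifying that $B^k/A^\ell$ is real along each such configuration does not obviously reduce to a single uniform identity — which is the principal reason the full conjecture remains open beyond the small cases treated here.
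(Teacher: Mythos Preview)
Your overall framework---partial fractions, reduction to the root ratios $q_i=\tau_i/\tau_1$, and expressing $B^k/A^\ell$ as a function of these ratios---matches the paper's. Your symmetric-function identity for $B^k/A^\ell$ is equivalent in content to the paper's $q$-discriminant computation (Theorem~\ref{ipt1}), which yields $B^k/A^\ell = (1-q^k)^k\big/\bigl((1-q^\ell)^\ell(q^\ell-q^k)^{k-\ell}\bigr)$ at any ratio $q$ of two roots; the paper then records (Lemma~\ref{go2}) that this quantity is real whenever $|q|=1$, which is the ``phase cancellation'' you describe.

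The genuine gap is exactly the one you flag, and it is not merely a technicality. Your argument ``$|\tau_1(z_0)|<|\tau_2(z_0)|$ would force $c_1(z_0)=0$'' lets $n\to\infty$ at a point $z_0$ that is a zero of one fixed $P_n$, so nothing follows. The paper does \emph{not} repair this by proving an equal-modulus statement for every $n$. Instead, for each of the two special cases it writes the single equation $P_n(z_0)=0$ explicitly in the ratios $q_2,\ldots,q_k$ (from your partial-fraction formula), couples it with the Vieta relations, and establishes a case-specific lemma (Lemma~\ref{kab1} for $(k,\ell)=(3,2)$, Lemma~\ref{kab} for $(4,3)$) showing that these combined algebraic constraints force the ratios onto an explicit plane curve $\Gamma\subset\mathbb{C}$, \emph{the same curve for every $n$}. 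Only then is $f(q)=B^k/A^\ell$ evaluated along $\Gamma$ and shown to be real, component by component. The equal-modulus condition via Theorem~\ref{xxx} is invoked separately, and only for the inequality constraint on $\Re(B^k/A^\ell)$, not for $\Im(B^k/A^\ell)=0$.

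So the missing ingredient in your plan is precisely this ``ratio curve'' lemma: for each fixed $n$, the system $\{P_n=0\}\cup\{\text{Vieta}\}$ confines the $q_i$ to a curve on which $f$ is real. Producing and proving that lemma is the case-specific labour that makes $(3,2)$ and $(4,3)$ tractable, and the lack of a uniform version for general coprime $(k,\ell)$ is why the statement remains a conjecture.
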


The purpose of this paper is to prove  some specific cases of  Conjecture $1$ and make them more precise.
It is important to note that the rational function $B^k (z)/A^\ell(z)$ can be written in terms of the discriminant of the denominator of the generating function of the polynomial sequence generated by \eqref{asb}. Our approach to the proof of the specific cases of Conjecture $1$  uses the \textit{q}-analogue of the discriminant of a polynomial, a concept introduced by Ismail in \cite{Is} and used in \cite{T}.  
 In particular, the ratios of zeros of the denominator of the generating function with equal modulus will play a fundamental role. 
 
 \medskip
The paper is organized as follows. In Section \ref{B}, we review the most important notions and basic results. In Section \ref{BI}, we prove some relevant general results. In Section \ref{BII}, we prove the main result of the paper. In Section \ref{BIII}, we give some numerical examples.

\section{Discriminants}\label{B}
This paper uses discriminants, and therefore we present below a few key results about them from the literature. Our initial step is to recall the concept of discriminants of polynomials, both ordinary and \textit{q}-discriminants.

 
\begin{definition}[see \cite{Ge}]
Let $P(x)$ be a univariate polynomial of degree $n$ with zeros $x_1, \dots, x_n$ and leading coefficient $a_n$. The ordinary discriminant of $P(x)$  is  defined as  $$Disc_x(P(x))=a_n^{2n-2} \prod_{1\leq i<j\leq n}(x_i-x_j)^2.$$  
\end{definition} 
This ordinary discriminant of $P(x)$ can also be expressed in terms of the resultant of $P(x)$ and its derivative.  Let $P(x)$, $Q(x)$ be polynomials of degrees $n$, $m$ and leading coefficients $a_n$, $b_m$ respectively. The resultant of $P(x)$ and $Q(x)$ is defined by 
\begin{equation} \label{res}
Res(P(x),Q(x)) = a_n^m \prod_{P(x_i)=0} Q(x_i) = b_m^n \prod_{Q(x_i)=0} P(x_i). 
\end{equation}
The discriminant of $P(x)$ is then computed as follows
\begin{equation}\label{res1}
 Disc_x(P(x)) =(-1)^{\frac{n(n-1)}{2}}\frac{1}{a_n}  Res(P(x),P'(x)).
 \end{equation}
 For more details  on the ordinary discriminants and resultants, see \cite{KD, Ge}.
\medskip
\begin{example} \label{dis} 
If $f(x)$ and $g(x)$ are fixed complex polynomials in $x$ with $f\not\equiv 0$, then the discriminant of $P(x,t)=f(x)t^3+ g(x)t^2 +1$ as a polynomial in $t$ is given by \begin{eqnarray*} \label{eqq}
Disc_t(P(x,t))=-27(f(x))^2-4(g(x))^3.
\end{eqnarray*}
\end{example}
The proof/ verification of the above example follows directly from equation \eqref{res1}.

\medskip
Generally, the discriminant of a polynomial connects with the ratio of its zeros in the sense that the discriminant  is zero if and only if  the polynomial has a multiple zero. In particular, the discriminant of a polynomial vanishes  whenever there exist at least two zeros that are equal, i.e., zeros whose  ratio is $1$. 

\begin{definition}[see \cite{Is}]\label{qdisc} 
Let $P(t)$ be a polynomial of degree $n$ with zeros $x_1, \ldots, x_n$ and leading coefficient $a_n$. The \textit{q}-discriminant of  $P(t)$ is
\begin{eqnarray*}
Disc_t(P(t);q)=q^{n(n-1)/2}a_n^{2n-2} \prod_{1\leq i<j\leq n}(q^{-1/2}x_i-q^{1/2}x_j)(q^{1/2}x_i-q^{-1/2}x_j).\end{eqnarray*}
\end{definition}
In other words, 
\begin{eqnarray}  \label{dff}  Disc_t(P(t);q)=q^{n(n-1)/2}a_n^{2n-2} \prod_{1\leq i<j\leq n}\left(
x_i^2+ x_j^2-(q^{-1}+q)x_ix_j\right).
\end{eqnarray}
It is clear from Definition \ref{qdisc} that $Disc_t(P(t);1)=Disc_t(P(t))$, the ordinary discriminant of the polynomial. Additionally, the \textit{q}-discriminant vanishes whenever there is a pair of zeros with ratio $q$. Below we give some examples of \textit{q}-discriminants.
\begin{example} Let  $P(t)= at^2 + bt + c$ be a quadratic polynomial with complex coefficients. The \textit{q}-discriminant of $P(t)$ is  $$ Disc_t(P(t);q)=b^2q-(q+1)^2ac.$$ 
\end{example}

This case can easily be computed from Equation (\ref{dff}). 
\medskip A similar but tedious calculation give the $q$-discriminant of cubic polynomial $P(t)= at^3 + bt^2 + ct+d$ as
$$ Disc_t(P(t);q)= -a^2 d^2 (1 + q + q^2)^3 - b^2 q^2 (-c^2 q + b d (1 + q)^2) + 
 a c q (-c^2 q (1 + q)^2 + b d \Phi)$$
\textit{ where} $\Phi= 1 + 5 q + 6 q^2 + 5 q^3 + q^4.$

\medskip
In general computing \textit{q}-discriminants using Definition \ref{qdisc} can be a tedious exercise. However, the following proposition proved in \cite{Is} is often used.
\begin{proposition} \label{call} 
Let $P(t)$ be a polynomial of degree $n$ with zeros $x_1,\ldots,x_n$ and leading coefficient $a_n$. The \textit{q}-discriminant of $P(t)$ is given by:
$$Disc_t(P(t); q) = (-1)^{n(n-1)/2} a_n^{n-2}\prod_{i=1}^n(D_qP)(x_i),$$
where
$$(D_qP)(t) = \frac{P(t) - P(qt)}{t-qt}.$$
\end{proposition}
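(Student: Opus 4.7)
The plan is to substitute the factorization $P(t)=a_n\prod_{k=1}^n(t-x_k)$ into the expression $(D_qP)(x_i)$, and then rearrange the resulting product over the $x_i$ into the form of the right-hand side of \eqref{dff}.

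First I would observe that $(D_qP)(t)=\tfrac{P(t)-P(qt)}{t(1-q)}$, so since $P(x_i)=0$ we get
$$(D_qP)(x_i)=\frac{-P(qx_i)}{x_i(1-q)}=\frac{-a_n\prod_{k=1}^n(qx_i-x_k)}{x_i(1-q)}.$$
The factor $k=i$ in the numerator equals $x_i(q-1)$, which cancels the denominator $x_i(1-q)$ up to a sign, leaving
$$(D_qP)(x_i)=a_n\prod_{k\neq i}(qx_i-x_k).$$
Taking the product over $i=1,\ldots,n$ gives
$$\prod_{i=1}^n(D_qP)(x_i)=a_n^{\,n}\prod_{i\neq k}(qx_i-x_k)=a_n^{\,n}\prod_{1\le i<k\le n}(qx_i-x_k)(qx_k-x_i).$$

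Next I would carry out the key algebraic identity for one factor of the paired product:
$$(qx_i-x_k)(qx_k-x_i)=-q\bigl(x_i^2+x_k^2-(q+q^{-1})x_ix_k\bigr).$$
Since there are $\binom{n}{2}=n(n-1)/2$ such pairs, this contributes an overall factor $(-q)^{n(n-1)/2}$. Assembling everything,
$$\prod_{i=1}^n(D_qP)(x_i)=a_n^{\,n}(-1)^{n(n-1)/2}q^{n(n-1)/2}\prod_{1\le i<k\le n}\bigl(x_i^2+x_k^2-(q+q^{-1})x_ix_k\bigr).$$

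Finally, multiplying by $(-1)^{n(n-1)/2}a_n^{\,n-2}$ removes the sign and produces the prefactor $q^{n(n-1)/2}a_n^{2n-2}$, which by \eqref{dff} is precisely $Disc_t(P(t);q)$. That completes the proof.

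The computation is essentially bookkeeping; the only step that needs care is the paired identity above and ensuring the signs and powers of $a_n$ and $q$ match exactly. I do not expect any genuine obstacle — the main point is simply recognizing that the telescoping cancellation at $k=i$ in $(D_qP)(x_i)$ is what forces the clean product formula over ordered pairs, which then symmetrizes against the unordered product in Definition~\ref{qdisc}.
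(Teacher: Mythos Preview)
The paper does not actually prove this proposition; it is quoted from Ismail \cite{Is} and used as a black box. Your argument supplies a complete and correct proof: the cancellation of the $k=i$ factor, the pairing $(qx_i-x_k)(qx_k-x_i)=-q\bigl(x_i^2+x_k^2-(q+q^{-1})x_ix_k\bigr)$, and the final count of signs and powers all check. The only cosmetic point is that the intermediate expression $\dfrac{-P(qx_i)}{x_i(1-q)}$ presupposes $x_i\neq 0$; since the resulting identity $(D_qP)(x_i)=a_n\prod_{k\neq i}(qx_i-x_k)$ is polynomial in the $x_i$, it extends to $x_i=0$ by continuity (or one can simply note that $D_qP$ is itself a polynomial and evaluate directly), so nothing is lost.
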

Proposition \ref{call} is used in \cite{TI, RK} to derive the expression of the \textit{q}-discriminant of the polynomial $D(t,x) = A(x)t^n + B(x)t + 1$. This is stated in Proposition \ref{cal}, (we suppress the parameter $x$).

\begin{proposition} [K. Tran \cite{TI}] \label{cal} 
Let $D(t)= A t^n + Bt +1$ be a polynomial  in $t$ of degree $n$ where $A$ and $B$ are arbitrary fixed complex functions. The \textit{q}-discriminant of $D(t)= At^n + Bt +1$ is  given by 
$$Disc_t(D(t); q)= \pm A^{n-2}\left( B^n\frac{ q^{n-1}(1-q^{n-1})^{n-1}}{(1-q)^{n-1}}   + (-1)^{n-1} \frac{ (1-q^n)^n}{(1-q)^n}A \right).$$
\end{proposition}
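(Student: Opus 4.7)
The plan is to apply Proposition~\ref{call} to $D(t)=At^n+Bt+1$ and reduce the computation to evaluating a product over the roots of $D$. Writing $x_1,\dots,x_n$ for these roots (with leading coefficient $A$), Proposition~\ref{call} gives
$$\mathrm{Disc}_t(D(t);q)=(-1)^{n(n-1)/2}A^{n-2}\prod_{i=1}^{n}(D_qD)(x_i),$$
so the entire proof amounts to computing the product on the right.

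First I would compute $D_qD$ directly from its definition. Since $D(t)-D(qt)=A(1-q^n)t^n+B(1-q)t$, dividing by $(1-q)t$ yields
$$(D_qD)(t)=\frac{A(1-q^n)}{1-q}\,t^{n-1}+B.$$
Next, I would use the relation $Ax_i^n+Bx_i+1=0$ to eliminate $x_i^{n-1}$: namely $Ax_i^{n-1}=-B-1/x_i$. Substituting and simplifying gives an expression of the form
$$(D_qD)(x_i)=u+\frac{v}{x_i},\qquad \text{with}\quad u=-\frac{Bq(1-q^{n-1})}{1-q},\quad v=-\frac{1-q^n}{1-q},$$
where the factor $1-\frac{1-q^n}{1-q}$ collapses to $-q(1-q^{n-1})/(1-q)$.

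Then I would evaluate $\prod_i (u+v/x_i)=\prod_i(ux_i+v)/\prod_i x_i$. Using Vieta's formula $\prod_i x_i=(-1)^n/A$ and the identity $\prod_i(ux_i+v)=u^n\prod_i(x_i+v/u)=(-u)^n D(-v/u)/A$, one finds the pleasant collapse
$$\prod_{i=1}^{n}(D_qD)(x_i)=u^nD(-v/u)=A(-v)^n-Bv\,u^{n-1}+u^n.$$
Finally, I would substitute the explicit values of $u$ and $v$ and simplify. The term $A(-v)^n$ immediately produces $A(1-q^n)^n/(1-q)^n$. The two $B^n$-terms factor as
$$\frac{(-1)^{n-1}B^n q^{n-1}(1-q^{n-1})^{n-1}}{(1-q)^n}\bigl[(1-q^n)-q(1-q^{n-1})\bigr],$$
and the bracket telescopes to $1-q$, collapsing the two terms into a single one. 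Combining everything and absorbing $(-1)^{n(n-1)/2}$ into the $\pm$ sign yields the claimed formula.

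The only real obstacle is the last simplification: a priori one gets three terms, and it is not obvious that two of them combine. The key observation $(1-q^n)-q(1-q^{n-1})=1-q$ is what makes the formula as clean as stated. Everything else is a careful but routine manipulation starting from $(D_qD)(t)$ and the relation imposed on each $x_i$ by $D(x_i)=0$.
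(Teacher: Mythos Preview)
Your argument is correct. Every step checks out: the formula for $(D_qD)(t)$, the substitution $Ax_i^{n-1}=-B-1/x_i$, the reduction $\prod_i(u+v/x_i)=u^nD(-v/u)$ via Vieta and the factored form of $D$, and the crucial telescoping $(1-q^n)-q(1-q^{n-1})=1-q$ that merges the two $B^n$-terms.

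The paper itself does not prove this proposition; it cites it from \cite{TI, RK}. It does, however, prove the generalization Theorem~\ref{ipt1} (the $q$-discriminant of $At^k+Bt^\ell+1$), and the method there is different from yours. Rather than evaluate $D_qD$ at the zeros of $D$, the paper uses the resultant symmetry \eqref{res} to swap roles and evaluate $D$ at the zeros of $D_qD$. Since $D_qD$ has the simple form $At^{k-1}\frac{1-q^k}{1-q}+Bt^{\ell-1}\frac{1-q^\ell}{1-q}$, its nonzero roots satisfy an explicit binomial equation, which makes the product tractable after a change of variables. Your route is cleaner for $\ell=1$ because the relation $D(x_i)=0$ immediately linearizes $(D_qD)(x_i)$ into $u+v/x_i$; the swap method in Theorem~\ref{ipt1} pays off in the general $(k,\ell)$ case, where $D(x_i)=0$ no longer yields such a convenient reduction.
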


\medskip For completeness let us review some definitions (also obtained in \cite{Sok, Boy}) about
the root distribution of a sequence of functions
$$f_m=\sum_{i=1}^{n}\alpha_i(z)\beta_i(z)^m,$$
where $\alpha_i(z)$ and $\beta_i(z)$ are analytic in a domain $D$. Let us call an index $i$ dominant at $z$ if
$|\beta_i(z)|\geq | \beta_j(z)|$ for all $j (1 \leq j \leq n)$. Let
$$D_i = \{ z \in D : i~~\mbox{is~~dominant~~at}~~z\}.
$$
Denote $Z(f_m)$ the  set of zeros of $f_m(z)$. 
\begin{definition}\label{yui}
The set of all $z\in D$ such that every neighborhood $U$ of $z$ has a non-empty intersection with infinitely many of the sets $Z(f_m)$ is called limit superior of $Z(f_m)$ and is denoted by
$\limsup Z(f_m)$.  On the other hand, the set of all $z\in D$ such that every neighborhood $U$ of $z$ has a non-empty intersection with all but finitely many of the sets $Z(f_m)$ is called limit inferior of $Z(f_m)$ and is denoted by $\liminf Z(f_m)$.
\end{definition}

We note that both $\liminf Z(f_m)$ and $\limsup Z(f_m)$ always exist. In addition, it always holds that
 $\liminf Z(f_m)\subset  \limsup Z(f_m)$. Moreover, in the event that both the limit superior and the limit inferior of $Z(f_m)$ coincide, then we call this set, the limit of the $Z(f_m)$, namely,
$$\lim Z(f_m)=\liminf Z(f_m)= \limsup Z(f_m).$$
%
We now state the following theorem, see ([9, Theorem 1.5]) which will be useful in this work.
\begin{theorem}
\label{xxx}
 Let $D$ be a domain in $\mathbb{C}$, and let $\alpha_1, \dots, \alpha_n,  \beta_1, \dots, \beta_n, (n \geq 2)$ be analytic functions on $D$, none of which is identically zero. Let us further assume a ``no-degenerate-dominance" condition, that is, there do not exist indices $i \neq  i'$ such that $\beta_i \equiv \omega \beta_{i'}$ for some constant $\omega$ with $|\omega| = 1$ and such that $D_i (= D_{i'})$ has nonempty interior. For each integer $m \geq 0$, define $f_m$ by $$f_m=\sum_{i=1}^{n}\alpha_i(z)\beta_i(z)^m.$$
Then $\liminf Z(f_m) = \limsup Z(f_m)$, and a point $z$ lies in this set if and only if either

\begin{enumerate}[(a)]
\item there is a unique dominant index $i$ at $z$, and $\alpha_i(z) = 0,$ or
\item there are two or more dominant indices at $z$.
\end{enumerate}
\end{theorem}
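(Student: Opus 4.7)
The proof splits by whether $z_0\in D$ has a unique dominant index or several: the unique case is handled by uniform convergence and Hurwitz's theorem, while the multi-dominant case requires an argument-principle / winding-number argument driven by the high frequency $m$.

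\emph{Unique dominant index.} If $i_0$ is the unique dominant index at $z_0$, choose a small disk $\Delta$ around $z_0$ on which $|\beta_j(z)/\beta_{i_0}(z)|\le r <1$ for every $j\neq i_0$. Then
$$g_m(z) := \frac{f_m(z)}{\beta_{i_0}(z)^m} = \alpha_{i_0}(z)+\sum_{j\neq i_0}\alpha_j(z)\left(\frac{\beta_j(z)}{\beta_{i_0}(z)}\right)^{m} \longrightarrow \alpha_{i_0}(z)$$
uniformly on $\Delta$. If $\alpha_{i_0}(z_0)\neq 0$, this forces $f_m$ to be nonvanishing on a smaller disk for all large $m$, so $z_0\notin\limsup Z(f_m)$. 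If $\alpha_{i_0}(z_0)=0$, Hurwitz's theorem applied to $g_m\to\alpha_{i_0}$ produces zeros of $f_m$ inside every sufficiently small disk around $z_0$ for all large $m$, so $z_0\in \liminf Z(f_m)$. This settles the easy direction and case (a).

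\emph{Two or more dominant indices.} Let $S=\{i:|\beta_i(z_0)|=\max_j|\beta_j(z_0)|\}$, with $|S|\ge 2$, fix $i_0\in S$, and set $\gamma_i:=\beta_i/\beta_{i_0}$, so $|\gamma_i(z_0)|=1$ for every $i\in S$. On a small disk around $z_0$ the indices $j\notin S$ contribute an exponentially small term to $g_m$, so up to a Rouch\'e/Hurwitz transfer it suffices to produce a zero of
$$h_m(z) := \sum_{i\in S}\alpha_i(z)\gamma_i(z)^m$$
in every neighborhood of $z_0$ for all large $m$. The no-degenerate-dominance hypothesis prevents every $\gamma_i$ ($i\in S\setminus\{i_0\}$) from being an identically constant unimodular function, since otherwise $\beta_i\equiv\omega\beta_{i_0}$ globally with $|\omega|=1$ and $D_i=D_{i_0}$ would contain an open neighborhood of $z_0$. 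Hence some $\gamma_i$ with $i\in S\setminus\{i_0\}$ is non-constant with $|\gamma_i(z_0)|=1$, and by the open mapping theorem both $|\gamma_i|>1$ and $|\gamma_i|<1$ occur in every neighborhood of $z_0$.

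\emph{The hard step.} The main obstacle is parlaying this qualitative oscillation into an actual zero of $h_m$. The plan is to apply the argument principle to $h_m$ on a small circle around $z_0$ of radius $r$: the high power $m$ in $\gamma_i(z)^m$ amplifies any non-trivial variation of $\arg\gamma_i$ around the circle, so for all sufficiently large $m$ the image of this circle under $h_m$ winds around $0$, forcing $h_m$ to vanish inside. The delicate points are the uniformity in $m$ of this winding number and the simultaneous control of several non-constant $\gamma_i$ that may partially cancel; this is precisely where the no-degenerate-dominance hypothesis does its real work. A final application of Rouch\'e transfers the zero from $h_m$ to $g_m$ and hence to $f_m$, giving $z_0\in \liminf Z(f_m)$. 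Combining the three cases yields $\liminf Z(f_m)=\limsup Z(f_m)$ and identifies this set as the union of configurations (a) and (b).
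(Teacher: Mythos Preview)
The paper does not prove this theorem: it is quoted from Sokal \cite{Sok} (Theorem~1.5 there, itself a sharpening of the Beraha--Kahane--Weiss theorem \cite{KK,KK2}) and used only as a black box in the subsequent arguments. There is thus no in-paper proof to compare your proposal against.

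On the merits of your sketch: the unique-dominant case via uniform convergence and Hurwitz is standard and correct, and this is also how the original references handle it. The multi-dominant case is where all the content lies, and your outline does not close it. You yourself flag the two real difficulties---uniformity in $m$ of the winding number, and possible cancellation among several non-constant $\gamma_i$---but then simply assert that the argument principle ``forces $h_m$ to vanish inside.'' That is the statement to be proved, not a proof of it: when $|S|\ge 3$, the image of a fixed small circle under $\sum_{i\in S}\alpha_i\gamma_i^m$ has no obvious reason to wind around $0$ for every large $m$ without further structure. The published proofs first reduce to the case $|S|=2$ by showing that points with three or more simultaneously dominant indices form a nowhere-dense set, so any neighborhood of $z_0$ contains a point $z_1$ where exactly two indices dominate; there the single ratio $\gamma=\beta_{i_1}/\beta_{i_0}$ is non-constant with $|\gamma(z_1)|=1$, and one can explicitly solve $\alpha_{i_0}(z)+\alpha_{i_1}(z)\gamma(z)^m=0$ near $z_1$ for each large $m$ (the open mapping theorem gives values of $\gamma$ on both sides of the unit circle, and one tracks the $m$-th roots), after which Rouch\'e absorbs the subdominant terms. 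Your plan is in the right spirit but is missing this reduction step and the explicit two-term endgame.
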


\begin{remark}\label{label}
\end{remark}
If $z^* \in \mathbb{C}$ is a fixed complex number such that the zeros in $t$ of $D(t,z^*)=A(z^*)t^k+B(z^*)t^\ell +1$ are distinct, then by use of partial fraction decomposition and Theorem \ref{xxx}, $z^*$ belongs to $\liminf Z(f_m)$ when the two smallest zeros (in modulus) of $D(t,z^*)$ have the same modulus. Note that the functions $t_0(z), t_1(z), \ldots, t_k(z)$ are analytic in a neighborhood of $z^*$ by the implicit function theorem \cite{zzz}.

Now with our setup, if we can obtain a point $z^*\in \mathbb{C}$ so that the zeros of $D(t,z^*)$ are distinct and the two smallest (in modulus) zeros of $D(t,z^*)$ have the same modulus, then for such a point $z^*$, we have that $z^* \in \liminf Z(f_m)= \limsup Z(f_m)$. This implies that on a small neighborhood of $z^*$,  there is a zero of $f_m$ for all large $m$ by the Definition \ref{yui} of $\liminf Z(f_m)$. For details see \cite{zzz}.

It is important to note that this elegant theorem provides a description of the asymptotic behaviour of the zeros of $\{P_n(x)\}$ in the general case. On the other hand, Conjectures \ref{conj:Tran} and  $1$ describe specific cases  where all the  zeros of  $P_n(x)$ (different from those of $A(x)$) actually lie on the 
 curve $\cC$ for all $n$ or for all sufficiently large $n$.

\section{GENERAL RESULTS} \label{BI}

In order to prove some  specific cases of Conjecture $1$, we first prove some general auxiliary lemmas. The results for cases when $(k,\ell)=(3,2)$ and $(k,\ell)=(4,3)$ then fit in as specific cases.  In this section among other things, we compute the \textit{q}-discriminant of a special polynomial  $D(t, z)= A(z)t^k+ B(z)t^\ell+1$ for coprime $1\le \ell <k$. We begin as follows.

\begin{lemma} \label{nyi} 

Let $A(z), B(z)$ be fixed complex polynomials and $\{P_n(z)\}$ be a sequence of polynomials given by linear 
recurrence relation of length $k$
\begin{equation}\label{uutii}
P_n(z)+B(z)P_{n-\ell}(z)+A(z)P_{n-k}(z)=0
\end{equation}
with the standard initial conditions $P_0(z)=1$, $P_{-1}(z)=\dots=P_{1-k}(z)=0$ where $1\le \ell <k$ are  coprime.
The generating function of $\{P_n(z)\}_{i=0}^{\infty}$  is given by

\begin{equation}\label{uut}
 \sum_{n=0}^{\infty}P_n(z)t^n= \frac{1}{1+B(z)t^\ell+ A(z)t^k}.
 \end{equation}
\end{lemma}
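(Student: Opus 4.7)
The plan is to verify the formula by the standard generating-function manipulation: introduce $F(z,t):=\sum_{n=0}^{\infty}P_n(z)t^n$, rewrite the recurrence as an identity between formal power series in $t$, and solve for $F(z,t)$.

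Concretely, I would multiply the recurrence \eqref{uutii} by $t^n$ and sum over $n\ge 1$ (the range on which the recurrence is asserted to hold), obtaining
\begin{equation*}
\sum_{n\ge 1}P_n(z)t^n+B(z)\sum_{n\ge 1}P_{n-\ell}(z)t^n+A(z)\sum_{n\ge 1}P_{n-k}(z)t^n=0.
\end{equation*}
The first sum is $F(z,t)-P_0(z)=F(z,t)-1$. For the second, I would shift the index by $\ell$: letting $m=n-\ell$, the sum becomes $t^\ell\sum_{m\ge 1-\ell}P_m(z)t^m$, and the initial conditions $P_{-1}=\cdots=P_{1-\ell}=0$ (which hold since $\ell<k$) kill the negative-index terms, giving $t^\ell F(z,t)$. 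An identical shift by $k$ shows that the third sum equals $t^k F(z,t)$.

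Substituting these identities yields
\begin{equation*}
\bigl(1+B(z)t^\ell+A(z)t^k\bigr)F(z,t)=1,
\end{equation*}
from which \eqref{uut} follows after dividing by the nonzero (as a formal power series, since its constant term in $t$ is $1$) factor $1+B(z)t^\ell+A(z)t^k$.

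There is no real obstacle here; the only thing that requires care is bookkeeping in the index shifts, where one must verify that the vanishing initial conditions $P_{-1}=\cdots=P_{1-k}=0$ are exactly what is needed to make the shifted sums equal $t^\ell F$ and $t^k F$ with no correction terms. Since $1\le\ell<k$, the shift by $\ell$ only reaches indices down to $1-\ell\ge 1-k$, which is covered by the hypothesis. The argument is purely formal, so no convergence issues arise, and the identity is valid as an equality in $\bC[z][[t]]$.
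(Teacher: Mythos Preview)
Your proof is correct and follows essentially the same standard generating-function approach as the paper. The only cosmetic difference is that the paper sums the recurrence from $n\ge k$ (so that all indices are nonnegative) and then must separately check that the leftover numerator $1+\sum_{n=1}^{\ell-1}P_nt^n+\sum_{n=\ell}^{k-1}(P_n+BP_{n-\ell})t^n$ collapses to $1$, whereas you sum from $n\ge 1$ and let the vanishing initial conditions $P_{1-k}=\cdots=P_{-1}=0$ absorb the negative-index terms directly; your bookkeeping is slightly cleaner but the argument is the same.
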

\begin{proof}
Let $G(t)=\sum_{n\geq 0}P_nt^n$ be the generating function of $\{P_n\}_{i=0}^{\infty}$. Multiplying \eqref{uutii} by $t^n$ and summing over from $n=k$ to infinity, we obtain
\begin{align*}
\sum_{n\geq k}P_nt^n+ B\sum_{n\geq k}P_{n-\ell}t^n+A \sum_{n\geq k}P_{n-k}t^n=0.
\end{align*}
Simplifying gives
\begin{align*}
\left(G(t)- \sum_{n=0}^{k-1}P_nt^n\right)+ B t^\ell\left(G(t)- \sum_{n=0}^{k-\ell -1}P_nt^n\right)+ At^kG(t)=0
\end{align*}
which is equivalent to
\begin{align*}
G(t)&=\frac{1}{1+B(z)t^\ell+ A(z)t^k}\left(\sum_{n=0}^{k-1}P_nt^n+ Bt^\ell \sum_{n=0}^{k-\ell -1}P_nt^n \right)\\
&=\frac{1}{1+B(z)t^\ell+ A(z)t^k}\left(1+ \sum_{n=1}^{\ell-1}P_n t^n   + \sum_{n=\ell}^{k-1}(P_n + BP_{n-\ell})t^n\right).
\end{align*}
From Equation (\ref{uutii}), we have $P_n=0$ for $0< n \leq \ell-1$ and $P_n + BP_{n-\ell}=0$ for $\ell \leq n \leq k-1$.  It follows that $1+ \sum_{n=1}^{\ell-1}P_n t^n   + \sum_{n=\ell}^{k-1}(P_n + BP_{n-\ell})t^n=1$, which proves the required result.
\end{proof}

Let us now consider the \textit{q}-discriminant of $D(t,z)$. In the following proposition, we shall suppress the variable $z$.  
\begin{theorem} \label{ipt1} For coprime $1\le \ell <k$, the $q$-discriminant of  $D(t)= At^k+ Bt^\ell+1$  is given by
\begin{align} \label{In4ii}
Disc_t(D(t);q)=  (-1)^{\frac{k(k+1)}{2}}\left((q^k-1)^{k}A^\ell -B^k (1-q^\ell)^{\ell}(q^\ell -q^k)^{k-\ell}\right) W
\end{align}
where $W=A^{k-\ell -1} B^{\ell-1} (1-q)^{-k}$.
\end{theorem}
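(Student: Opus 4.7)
The plan is to apply Proposition \ref{call} directly to $D(t) = At^k + Bt^\ell + 1$, whose leading coefficient is $A$ and which has degree $k$. A straightforward expansion gives
$$(D_qD)(t) = \frac{D(t)-D(qt)}{t(1-q)} = \frac{A(1-q^k)t^{k-1} + B(1-q^\ell)t^{\ell-1}}{1-q} = \frac{t^{\ell-1}}{1-q}\bigl(A(1-q^k)t^{k-\ell} + B(1-q^\ell)\bigr).$$
Let $t_1,\dots,t_k$ be the zeros of $D(t)$. Since the constant term is $1$, Vieta's formula gives $\prod_i t_i = (-1)^k/A$, hence $\prod_i t_i^{\ell-1} = (-1)^{k(\ell-1)}/A^{\ell-1}$. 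All remaining work is the evaluation of $\prod_{i=1}^{k} g(t_i)$, where $g(t) := A(1-q^k)t^{k-\ell} + B(1-q^\ell)$.

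To handle this product I would swap the order of multiplication. Let $\omega_1,\dots,\omega_{k-\ell}$ denote the zeros of $g$; they are the $(k-\ell)$-th roots of $\alpha := -B(1-q^\ell)/(A(1-q^k))$. Writing $g(t) = A(1-q^k)\prod_j(t-\omega_j)$ and using $\prod_i(t_i-\omega_j) = (-1)^k D(\omega_j)/A$ yields
$$\prod_{i=1}^k g(t_i) = A^k(1-q^k)^k\prod_{j=1}^{k-\ell}\frac{(-1)^k D(\omega_j)}{A} = (-1)^{k(k-\ell)} A^\ell(1-q^k)^k \prod_{j=1}^{k-\ell} D(\omega_j).$$
Next, substituting $\omega_j^{k-\ell} = \alpha$ into $D(\omega_j) = A\omega_j^k + B\omega_j^\ell + 1 = \omega_j^\ell(A\omega_j^{k-\ell}+B)+1$ reduces this to the tidy form $D(\omega_j) = 1 + c\,\omega_j^\ell$, where $c := B(q^\ell - q^k)/(1-q^k)$.

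The coprimality hypothesis is the crux. Because $\gcd(\ell, k) = 1$ implies $\gcd(\ell, k-\ell) = 1$, the map $\omega\mapsto \omega^\ell$ sends the set of $(k-\ell)$-th roots of $\alpha$ bijectively onto the set of $(k-\ell)$-th roots of $\alpha^\ell$. With this in hand, the cyclotomic identity $\prod_{\eta^n=1}(1-\eta x) = 1-x^n$ applied with $n = k-\ell$ gives the closed form
$$\prod_{j=1}^{k-\ell}(1 + c\,\omega_j^\ell) = 1 - (-1)^{k-\ell} c^{k-\ell}\alpha^\ell,$$
into which one substitutes the definitions of $c$ and $\alpha$. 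The remaining work is bookkeeping: convert $(1-q^k)^k$ into $(-1)^k(q^k-1)^k$ so that the two terms acquire their displayed form, multiply through by the prefactor $(-1)^{k(k-1)/2}A^{k-2}$ from Proposition~\ref{call}, and check that the sign exponents $k(k-1)/2 + k(\ell-1) + k(k-\ell) + k$ collapse modulo $2$ to $k(k+1)/2$.

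The main obstacle is precisely this bijection step: without the coprimality of $\ell$ and $k-\ell$ the set $\{\omega_j^\ell\}$ would cover the $(k-\ell)$-th roots of $\alpha^\ell$ with multiplicity, and the cyclotomic telescoping would fail, which is why the hypothesis appears in the statement. A secondary, purely mechanical difficulty is keeping correct control of all sign exponents when collecting the final answer; I would carry these out symbolically and verify at the end by specialising to $q=1$ (where the formula must recover a standard ordinary discriminant) and to the case $(k,\ell)=(3,2)$ worked out in Example~\ref{dis}.
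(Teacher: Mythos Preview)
Your proof is correct and follows essentially the same strategy as the paper: apply Proposition~\ref{call}, swap the product via the resultant identity to evaluate $D$ at the nonzero roots of $D_qD$, and exploit that $\gcd(\ell,k-\ell)=1$ so that the $\ell$-th powers of the $(k-\ell)$-th roots of $\alpha$ again form a complete set of $(k-\ell)$-th roots. The only cosmetic difference is that where the paper performs the substitution $\xi=t^\ell$ and then a further shift $Y=\xi+\text{const}$ to read off the product as a constant term, you invoke the cyclotomic identity $\prod_{\eta^n=1}(1-\eta x)=1-x^n$ directly; this is the same computation packaged more compactly.
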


\begin{proof}
By Proposition \ref{call}, we have \begin{eqnarray*}
Disc_t(D(t); q)=  (-1)^{k(k-1)/2} A^{k-2} \prod_{\substack{D(s_i^*)=0\\ 1\leq i \leq k }}(D_qD)(s_i^*) ,\end{eqnarray*}
where $$(D_qD)(t)= \frac{D(t)-D(qt)}{t-qt}.$$
Now  \begin{eqnarray*} 
(D_qD)(t)&=& \frac{A t^k+ B t^\ell +1-(Aq^k t^k+ Bq^\ell t^\ell +1)}{t-qt}
\end{eqnarray*}
\begin{eqnarray}  \label{wo}
 &=&A t^{k-1}\frac{1-q^k}{1-q} + B t^{\ell-1}\frac{1-q^\ell}{1-q}.\end{eqnarray}
 Using equation (\ref{res}) with $P=D$ and $Q=D_qD$ we obtain 
$$\prod_{\substack{D(s_i^*)=0\\ 1\leq i \leq k }}(D_qD)(s_i^*)=A\left( \frac{1-q^k}{1-q}\right)^k\prod_{\substack{(D_qD)(t_i^*)\\ 1\leq i \leq k-1 }}D(t_i^*).$$
This implies that 
\begin{align}
Disc_t(D(t);q)&= (-1)^{\frac{k(k-1)}{2}} A^{k-1} \left(\frac{1-q^k}{1-q}\right)^k \prod_{\substack{(D_qD)(t_i^*)\\ 1\leq i \leq k-1 }}D(t_i^*) \nonumber\\ &=(-1)^{\frac{k(k-1)}{2}} A^{k-1} \left(\frac{1-q^k}{1-q}\right) \prod_{\substack{(D_qD)(t_i^*)\\ 1\leq i \leq k-1 }}\frac{1-q^k}{1-q}D(t_i^*) \label{lab}.
\end{align}
Now, $D(t)$ and $D_qD(t)$  are related by  the equation
\begin{eqnarray*}
(D_qD)(t)&= \left(\frac{D(t)}{t}-Bt^{\ell -1} -\frac{1}{t}\right)  \frac{1-q^k}{1-q}+ Bt^{\ell -1}\left(  \frac{1-q^{\ell}}{1-q}\right).
\end{eqnarray*}
Therefore
\begin{align*}
D(t)\left( \frac{1-q^k}{1-q}\right)&=(B t^{\ell}+1)\left(  \frac{1-q^k}{1-q}\right)- B t^{\ell}\left(\frac{1-q^{\ell}}{1-q}\right) + t (D_qD)(t) 
\\&= t (D_qD)(t) + B t^{\ell}\left(  \frac{q^\ell -q^k}{1-q}
\right) + \frac{1-q^{k}}{1-q}.
\end{align*}
At the zeros $t_i^*$ of $(D_qD)(t)$ for $1\leq i \leq k-1$, we have $(D_qD)(t_i^*)=0$, hence 
\begin{align}\label{xy}
D(t_i^*)\left( \frac{1-q^k}{1-q}\right)= (t_i^*)^{\ell}\left(  \frac{q^\ell -q^k}{1-q} \right)B + \frac{1-q^{k}}{1-q}.
\end{align}
Now substituting equation \eqref{xy} into equation \eqref{lab} we obtain 
\begin{align*}
Disc_t(D(t);q)&= (-1)^{\frac{k(k-1)}{2}} A^{k-1} \left( \frac{1-q^k}{1-q}\right) \prod_{\substack{(D_qD)(t_i^*)\\ 1\leq i \leq k-1 }} \left((t_i^*)^{\ell}\left(  \frac{q^\ell-q^k}{1-q} \right)B + \frac{1-q^{k}}{1-q}\right) \\ &= (-1)^{\frac{k(k-1)}{2}} (AB)^{k-1}\left(\frac{1-q^k}{1-q}\right)\left( \frac{q^\ell -q^k}{1-q} \right)^{k-1}\\ &\qquad \times \prod_{\substack{(D_qD)(t_i^*)\\ 1\leq i \leq k-1 }}
\left((t_i^*)^\ell + \frac{1-q^k}{B(q^\ell -q^k)}\right).
\end{align*}
From Equation (\ref{wo}), the condition $(D_qD)(t_i^*)=0$  for $1\leq i \leq k-1$ implies that $0$ is a root that occurs with multiplicity $\ell-1$. The other zeros satisfy the relation
\begin{eqnarray} \label{In2}
(t_i^*)^{k-\ell}=  -\frac{B}{A}\left( \frac{1-q^\ell}{1-q^k} \right).
\end{eqnarray}
Since the integers $k$ and $\ell$ are coprime which is equivalent $k-\ell$ and $\ell$ being coprime, we can make the change of variables $\xi= t^{\ell}$. We observe that $t_i^*$ for $1\leq i \leq k-\ell$ is a zero to the polynomial $r(t)=t^{k-\ell}+\frac{B}{A}\left( \frac{1-q^\ell}{1-q^k} \right)$. By the change of variable $\xi= t^{\ell}$, the polynomial $r(t)$ becomes
\begin{align} \label{In3}
\Psi(\xi)=\xi ^{k-\ell}- \left(\frac{-B}{A} \right)^\ell \left( \frac{1-q^\ell}{1-q^k}\right)^\ell.
\end{align}
In particular, the solutions $\xi_i^*$ to $\Psi(\xi)=0$ for $1\leq i \leq k-\ell$ are of the form $(t_i^*)^{\ell}$ where $t_i^{\ast}$ is a solution to $r(t)=0$, (nonzero solution to $(D_qD)(t)=0$). So 
\begin{align*}
\prod_{\substack{(D_qD)(t_i^*)=0\\ t_i^*\neq 0 \\1\leq i \leq k-\ell}} \left((t_i^*)^\ell + \frac{1-q^k}{B(q^\ell -q^k)}\right)=\prod_{\substack{\Psi(\xi_i^*)=0\\1\leq i \leq k-\ell}} \left(\xi_i^* + \frac{1-q^k}{B(q^\ell -q^k)}\right).
\end{align*}
The zero $t_i^{\ast}=0$ for $1\leq i\leq \ell -1$ contributes the following factor to the product of the discriminant
\begin{align*}
\left(\frac{1-q^k}{B(q^\ell -q^k)}\right)^{\ell -1}
\end{align*}
The expression for $Disc_t(D(t);q)$ now has the form

\begin{align*} \label{In4}
Disc_t(D(t);q)&=(-1)^{\frac{k(k-1)}{2}} (AB)^{k-1}\left(\frac{1-q^k}{1-q}\right)\left( \frac{q^\ell -q^k}{1-q} \right)^{k-1}\left(\frac{1-q^k}{B(q^\ell -q^k)}\right)^{\ell -1} \\&\qquad \times\prod_{\substack{\Psi(\xi_i^*)=0\\1\leq i \leq k-\ell}} \left(\xi_i^* + \frac{1-q^k}{B(q^\ell -q^k)}\right).
\end{align*}
We now simplify the remaining product.
Consider the following equation:
\begin{eqnarray} \label{one1y}
\xi^{k-\ell}=  \left(\frac{-B}{A} \right)^\ell \left( \frac{1-q^\ell}{1-q^k}\right)^\ell.
\end{eqnarray}
Using the change of variable  \begin{eqnarray*} \label{one21}
 \xi=  Y - \frac{1-q^k}{B(q^\ell -q^k)},
\end{eqnarray*}
Equation (\ref{one1y}) becomes
\begin{eqnarray*} \label{one23}
\left( Y - \frac{1-q^k}{B(q^\ell -q^k)}\right)^{k-\ell}-\left(\frac{-B}{A} \right)^\ell \left( \frac{1-q^\ell}{1-q^k}\right)^\ell=0.\end{eqnarray*}
Using Binomial Theorem, we get
\begin{equation*} \label{eh12}
g(Y):= Y ^{k-\ell}+ \dots + \left(- \frac{(1-q^k)}{B(q^\ell-q^k)} \right)^{k-\ell} - \left(\frac{-B}{A} \right)^\ell \left(  \frac{1-q^\ell}{1-q^k}\right)^\ell=0.
\end{equation*} 
The constant term in $g(Y)$ is 
\begin{equation*}
C= \left(- \frac{(1-q^k)}{B(q^\ell-q^k)} \right)^{k-\ell} - \left(\frac{-B}{A} \right)^\ell \left( \frac{1-q^\ell}{1-q^k}\right)^\ell.
\end{equation*}
For $i=1,2,\dots, k-\ell$, let $y_i$ are the solutions to $g(Y)=0$. Then
\begin{eqnarray*}
\xi_j=  y_i - \frac{1-q^k}{B(q^\ell -q^k)}
\end{eqnarray*} 
for some $j\in \{1,2,\dots, k-\ell \}$. So we can choose $i=j$ so that $\xi_i=  y_i - \frac{1-q^k}{B(q^\ell -q^k)}$.

Now the product of solutions to $g(Y)=0$ is given by
\begin{eqnarray*} 
C=(-1)^{k-\ell}\prod_{i=1}^{k-\ell} y_i= \left[   \left(- \frac{(1-q^k)}{B(q^\ell-q^k)} \right)^{k-\ell} - \left(\frac{-B}{A} \right)^\ell \left( \frac{1-q^\ell}{1-q^k}\right)^\ell\right].
\end{eqnarray*}
or
\begin{align}\label{kataala}
\prod_{j=1}^{k-\ell} \left(\xi_j + \frac{1-q^k}{B(q^\ell -q^k)}\right) =\prod_{i=1}^{k-\ell} y_i= (-1)^{k-\ell}C.
\end{align}
Plugging equation (\ref{kataala})   into the expression for $Disc_t(D(t);q)$ gives
\begin{align} \label{Inkabu}
Disc_t(D(t);q) &= (-1)^{\frac{k(k-1)}{2}} \frac{(AB)^{k-1}(q^l-q^k)^{k-1}(1-q^k)}{(1-q)^k} \left(\frac{1-q^k}{B(q^\ell -q^k)}\right)^{\ell -1}\nonumber \\&\qquad \times (-1)^{k-\ell}\left[ 
\frac{(q^k-1)^{k-\ell}}{B^{k-\ell}(q^l-q^k)^{k-\ell}} -\frac{(-1)^{\ell}B^{\ell}(1-q^{\ell})^{\ell}}{A^{\ell}(1-q^k)^{\ell}}
\right].
\end{align}
Simplifying equation (\ref{Inkabu})  gives 
\begin{eqnarray*} 
\label{In4i}Disc_t(D(t);q)=  (-1)^{\frac{k(k+1)}{2}}\left((q^k-1)^{k}A^\ell- B^k (1-q^\ell)^{\ell}(q^\ell -q^k)^{k-\ell}\right)W,
\end{eqnarray*}
where $W= A^{k-\ell -1} B^{\ell-1} (1-q)^{-k}$. The proof is complete.

\end{proof}

\begin{corollary} \label{corr} Let $A$ and $B$ be fixed complex polynomials.
The $q$-discriminant of  $D(t)= At^3+ Bt^2+1$ is 
\begin{align*} 
Disc_t(D(t);q)= \frac{B \left(A^2 \left(q^3-1\right)^3+B^3 (q-1) q^2 \left(q^2-1\right)^2\right)}{(1-q)^3}.
 \end{align*}
\end{corollary}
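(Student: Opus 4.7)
The statement is a specialization of Theorem \ref{ipt1} to $(k,\ell)=(3,2)$, so the plan is simply to substitute these values into the general formula \eqref{In4ii} and simplify carefully.

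First I would compute the overall sign factor $(-1)^{k(k+1)/2}$ with $k=3$, which equals $(-1)^{6}=+1$, so no sign from the leading prefactor. Next I would evaluate the auxiliary quantity $W=A^{k-\ell-1}B^{\ell-1}(1-q)^{-k}$ with $k=3,\ \ell=2$; this collapses to $W=A^{0}\cdot B\cdot(1-q)^{-3}=B/(1-q)^{3}$, which already accounts for the overall factor $B/(1-q)^{3}$ appearing in the statement.

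Then I would expand the bracket $(q^k-1)^k A^\ell - B^k(1-q^\ell)^\ell (q^\ell-q^k)^{k-\ell}$. With the given values this becomes
\begin{align*}
(q^{3}-1)^{3}A^{2}-B^{3}(1-q^{2})^{2}(q^{2}-q^{3})^{1}.
\end{align*}
The only step that requires care (and is the one real, if trivial, obstacle) is sign bookkeeping in the second summand: factor $q^{2}-q^{3}=-q^{2}(q-1)$ so that
\begin{align*}
-B^{3}(1-q^{2})^{2}(q^{2}-q^{3}) = B^{3}(q^{2}-1)^{2}\,q^{2}(q-1),
\end{align*}
using $(1-q^{2})^{2}=(q^{2}-1)^{2}$. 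Combining yields
\begin{align*}
A^{2}(q^{3}-1)^{3}+B^{3}(q-1)q^{2}(q^{2}-1)^{2}.
\end{align*}

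Finally, I would multiply this bracket by $W=B/(1-q)^{3}$ to obtain
\begin{align*}
Disc_t(D(t);q)=\frac{B\left(A^{2}(q^{3}-1)^{3}+B^{3}(q-1)q^{2}(q^{2}-1)^{2}\right)}{(1-q)^{3}},
\end{align*}
which is the claimed identity. The entire argument is a direct substitution; no new ideas beyond Theorem \ref{ipt1} are required.
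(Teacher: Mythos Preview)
Your proof is correct and follows exactly the paper's approach: the paper's proof of Corollary~\ref{corr} consists of the single sentence ``Set $k=3$ and $\ell=2$ in the Theorem~\ref{ipt1} above and obtain the result,'' and your proposal spells out precisely this substitution and the ensuing sign bookkeeping.
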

\begin{proof}
Set $k=3$ and $\ell =2$ in the Theorem \ref{ipt1} above and obtain the result.
\end{proof}

The following lemma will be used in the proof of the main results.

\begin{lemma}\label{go2} Let $k,\ell$ be integers and $q \in \mathbb{C}$ with $|q|=1$. The function defined by  $h(q)=\frac{(1-q^k)^k }{(1-q^\ell)^\ell(q^\ell-q^k)^{k-\ell}}$ is real-valued.
\end{lemma}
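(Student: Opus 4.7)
The plan is to parametrize $q=e^{i\theta}$ on the unit circle and directly compute an explicit real expression for $h(q)$. The key identity is
\[
1-e^{ix}=-2i\,e^{ix/2}\sin(x/2),
\]
from which the numerator factors as
\[
(1-q^k)^k=(-2i)^k\,e^{ik^2\theta/2}\sin^k\!\bigl(k\theta/2\bigr).
\]

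Next I would treat the two factors of the denominator separately. Applying the same identity gives
\[
(1-q^\ell)^\ell=(-2i)^\ell\,e^{i\ell^2\theta/2}\sin^\ell\!\bigl(\ell\theta/2\bigr),
\]
while factoring $q^\ell-q^k=e^{i(k+\ell)\theta/2}\bigl(e^{-i(k-\ell)\theta/2}-e^{i(k-\ell)\theta/2}\bigr)$ yields
\[
(q^\ell-q^k)^{k-\ell}=(-2i)^{k-\ell}\,e^{i(k^2-\ell^2)\theta/2}\sin^{k-\ell}\!\bigl((k-\ell)\theta/2\bigr).
\]
Multiplying these two, the powers of $(-2i)$ combine to $(-2i)^k$ and the exponentials combine to $e^{ik^2\theta/2}$, exactly matching the corresponding factors in the numerator.

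After cancellation one obtains the clean formula
\[
h(q)=\frac{\sin^k(k\theta/2)}{\sin^\ell(\ell\theta/2)\,\sin^{k-\ell}((k-\ell)\theta/2)},
\]
which is visibly real (for all $\theta$ where the denominator is nonzero, which includes the generic $q$ on the unit circle). This proves the claim.

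There is no real obstacle here; the only thing to watch is the bookkeeping of the exponents, namely verifying that $\ell^2+(k^2-\ell^2)=k^2$ and $\ell+(k-\ell)=k$ so that the arguments and prefactors of the denominator match those of the numerator. As a sanity-check alternative, one could instead use $\bar q=1/q$ on the unit circle, compute $\overline{1-q^m}=-q^{-m}(1-q^m)$ and $\overline{q^\ell-q^k}=-q^{-(k+\ell)}(q^\ell-q^k)$, and verify that the sign factors $(-1)^k=(-1)^\ell(-1)^{k-\ell}$ and the monomial factors in $q$ cancel, giving $\overline{h(q)}=h(q)$; this yields the same conclusion without producing the explicit sine-quotient formula.
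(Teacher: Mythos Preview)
Your proof is correct and is essentially the same as the paper's: the paper writes $(1-q^s)^s=q^{s^2/2}(-2i)^s(\Im(q^{s/2}))^s$ and cancels to obtain $h(q)=\dfrac{(\Im(q^{k/2}))^k}{(\Im(q^{\ell/2}))^\ell(\Im(q^{(k-\ell)/2}))^{k-\ell}}$, which is just your sine formula written with $\Im(e^{is\theta/2})=\sin(s\theta/2)$. Your bookkeeping of the exponents and the $(-2i)$ powers matches exactly, and your conjugation sanity check is a nice addition not present in the paper.
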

 \begin{proof}
Since $|q|=1$, we have $q^s-q^{-s}=2 i {\Im}(q^s)$, for any integer $s$, (here $i^2=-1$). It therefore follows that $(1-q^s)^s= (q^{\frac{s}{2}}(q^{-\frac{s}{2}}-q^{\frac{s}{2}}))^s= q^{s(\frac{s}{2})}(-2i)^s(\Im(q^{\frac{s}{2}}))^s.$ So, 
\begin{align*}
h(q)&=\frac{(1-q^k)^k }{(1-q^\ell)^\ell(q^\ell-q^k)^{k-\ell}} \\&= \frac{q^{k(\frac{k}{2})}(-2i)^k(\Im(q^{\frac{k}{2}}))^k}{q^{\ell(\frac{\ell}{2})}(-2i)^\ell(\Im(q^{\frac{\ell}{2}}))^\ell q^{\ell(k-\ell)}q^{(k-\ell)\frac{(k-\ell)}{2}}(-2i)^{k-\ell}(\Im(q^{\frac{k-\ell}{2}}))^{k-\ell}} \\&=
\frac{q^{k(\frac{k}{2})- (\ell(\frac{\ell}{2})+ \ell(k-\ell)+(k-\ell)\frac{(k-\ell)}{2})}(\Im(q^{\frac{k}{2}})^k}{(\Im(q^{\frac{\ell}{2}}))^\ell\cdot (\Im(q^{\frac{k-\ell}{2}}))^{k-\ell}} \\&=
\frac{(\Im(q^{\frac{k}{2}}))^k}{(\Im(q^{\frac{\ell}{2}}))^\ell\cdot (\Im(q^{\frac{k-\ell}{2}}))^{k-\ell}} .
\end{align*} 
Since the expressions $\Im(q^{\frac{k}{2}})$, $\Im(q^{\frac{\ell}{2}})$ and $\Im(q^{\frac{k-\ell}{2}})$ are all real, the result follows.

\end{proof}

\section{Specific cases of conjecture 1 when $(k,\ell)=(3,2)$ and $(k,\ell)=(4,3)$} \label{BII}
In this section, we settle the specific cases of $(k,\ell)=(3,2)$ and $(k,\ell)=(4,3)$ completely by showing that the zeros of $P_n(z)$ lie on a portion of real algebraic curve  $\Im\left( \frac{B^k(z)}{A^\ell(z)}\right)=0.$ We also provide the relevant inequality constraint. 

For the case  $(k,\ell)=(3,2)$, we begin with the following lemma. 
\begin{lemma}\label{kab1}
Suppose $\xi_2, \xi_3 \neq 0$ are complex numbers such that $\xi_2 + \xi_3 + 1 = 0$ and
 \begin{align*} 
  \frac{\xi_2^{n+1}-1 }{\xi_2-1}= \frac{\xi_3^{n+1}-1 }{\xi_3-1}
 \end{align*}
 for some positive integer $n$. Then $\xi_2$ and $\xi_3$  lie on the curve $\Gamma =C_1 \cup C_2 \cup C_3$ and are dense there as  $ n \to \infty$.   Here the  equations of $C_1, C_2$ and $C_3$ (see Fig.1) are given by 
 \[\Gamma =   \left\{\begin{array}{cl} C_1: & x^2+y^2=1 , \hspace*{0.3cm} x \leq -\frac{1}{2}, \\\\
 C_2:&  (x+1)^2+y^2=1 , \hspace*{0.3cm} x \geq -\frac{1}{2}, \\\\  C_3: & x=-\frac{1}{2}+iy, \hspace*{0.3cm} |y| \geq \frac{\sqrt{3}}{2}. \end{array}\right.\] 
\begin{figure}
    \includegraphics[height=7cm, width=7cm]{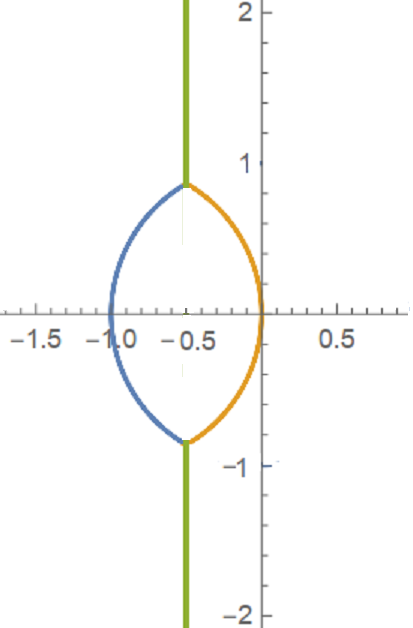}
     \caption{Distribution of the quotients of zeros of $D(t)=At^3+ Bt^2+1$. $C_1$ (blue), $C_2$ (orange) and $C_3$ (green). }\label{see1}
 \end{figure}
\end{lemma}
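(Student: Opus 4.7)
Setting $\xi_3=-1-\xi_2$ and $S_n(\xi):=(\xi^{n+1}-1)/(\xi-1)$, I will analyze the equation $S_n(\xi_2)=S_n(\xi_3)$ asymptotically as $n\to\infty$. A useful preliminary identity is
\begin{equation*}
|\xi_3|^2-|\xi_2|^2=|1+\xi_2|^2-|\xi_2|^2=1+2\Re(\xi_2),
\end{equation*}
so the vertical line $\Re(\xi_2)=-\tfrac12$ separates the regions on which $|\xi_2|$ is less than, equal to, or greater than $|\xi_3|$. Rewriting $S_n(\xi)=\frac{1}{1-\xi}-\frac{\xi^{n+1}}{1-\xi}$, the hypothesis is equivalent to
\begin{equation*}
\frac{\xi_2^{n+1}}{1-\xi_2}-\frac{\xi_3^{n+1}}{1-\xi_3}=\frac{\xi_3-\xi_2}{(1-\xi_2)(1-\xi_3)}, \qquad (\star)
\end{equation*}
whose right-hand side is a bounded nonzero quantity when $\xi_2\ne\xi_3$.

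The main step is to classify solutions according to the moduli of $\xi_2$ and $\xi_3$. Three regimes admit solutions for arbitrarily large $n$: (i) $|\xi_2|=|\xi_3|$, which forces $\Re(\xi_2)=-\tfrac12$ and $\xi_3=\overline{\xi_2}$; solutions are only possible when $|\xi_2|\ge 1$, i.e.\ $|\Im(\xi_2)|\ge\sqrt{3}/2$, which is $C_3$; (ii) $|\xi_2|=1$ and $|\xi_3|\le 1$, which combined with $\Re(\xi_2)\le -\tfrac12$ traces $C_1$; (iii) the symmetric case $|\xi_3|=1$ and $|\xi_2|\le 1$, yielding $C_2$. All remaining configurations are excluded by comparing growth rates in $(\star)$: if both moduli lie strictly below $1$, the LHS vanishes while the RHS is bounded away from $0$; if exactly one modulus exceeds $1$, the LHS blows up; and if both exceed $1$ with distinct moduli, one term dominates and the LHS again escapes. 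A short computation in case (ii) shows that $|v(\xi_2)|:=\left|(\xi_3-\xi_2)/(1-\xi_3)\right|$ is automatically $1$ when $|\xi_2|=1$, so the target in $(\star)$ is realizable by a unimodular $\xi_2^{n+1}$.

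For density on each arc I would exploit the equidistribution of the powers $\xi^{n+1}$. On $C_3$, the symmetry $\xi_3=\overline{\xi_2}$ reduces the condition to $\Im S_n(\xi_2)=0$, a real-analytic equation in $y=\Im(\xi_2)$ whose real zeros proliferate linearly in $n$ and become dense in $\{|y|\ge\sqrt{3}/2\}$. On $C_1$, parameterizing $\xi_2=e^{i\theta}$ turns $(\star)$ into the relation $\xi_2^{n+1}=v(\xi_2)+o(1)$; for any $\theta_0\in[2\pi/3,4\pi/3]$ one finds infinitely many $n$ with $(n+1)\theta_0\bmod 2\pi$ arbitrarily close to $\arg v(e^{i\theta_0})$, and an implicit-function-style argument converts each such $n$ into a genuine solution close to $e^{i\theta_0}$. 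The argument for $C_2$ is symmetric.

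The main obstacle is upgrading the asymptotic identity $(\star)$ into bona fide roots of the polynomial $S_n(\xi_2)-S_n(-1-\xi_2)$. A Rouch\'e-type argument, tracking the roots of the dominant equation $\xi_2^{n+1}=v(\xi_2)$ under the small perturbation $\xi_3^{n+1}/(1-\xi_3)$, should guarantee that approximate roots deform to actual roots without escaping to infinity or colliding pairwise at the corner points $-\tfrac12\pm i\sqrt{3}/2$, where all three arcs meet and the modulus regimes transition.
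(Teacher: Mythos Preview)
Your approach has a real gap: the lemma, as the paper uses it in the proof of the subsequent theorem, is an \emph{exact} statement --- for a fixed positive integer $n$, any pair $(\xi_2,\xi_3)$ satisfying both hypotheses already lies on $\Gamma$. Your exclusion arguments (``if both moduli lie strictly below $1$ the LHS vanishes'', ``if exactly one modulus exceeds $1$ the LHS blows up'', etc.) are growth-rate comparisons that are only valid in the limit $n\to\infty$; for a single finite $n$ they do not rule out anything. Likewise the claim in regime (i) that ``solutions are only possible when $|\xi_2|\ge 1$'' is asserted without a finite-$n$ justification. Your Rouch\'e/implicit-function machinery then manufactures solutions close to $\Gamma$ for large $n$, but this only yields the density half of the lemma, not the containment half. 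To see concretely why asymptotics are not enough, take $n=2$: then $S_2(\xi)=1+\xi+\xi^2$ and $S_2(\xi_2)-S_2(\xi_3)=(\xi_2-\xi_3)(\xi_2+\xi_3+1)=0$ for \emph{every} pair with $\xi_2+\xi_3+1=0$, so the two hypotheses impose no constraint at all and nothing forces $(\xi_2,\xi_3)$ onto $\Gamma$. Thus the containment claim cannot be extracted from growth rates alone; it needs an algebraic or counting argument tied to the specific $n$.

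The paper itself does not give an independent proof: it simply observes that under the substitution $\xi\mapsto 1/\xi$ the statement becomes Lemma~2 of Tran's paper~[2], where the corresponding result for $D(t)=At^3+Bt+1$ is established directly. That argument is exact rather than asymptotic (it locates the quotients of zeros on the analogous curve for every $n$, by an algebraic analysis of the equation $P_n(z_0)=0$), which is precisely what your asymptotic framework does not deliver. Your identification of the three arcs via the identity $|\xi_3|^2-|\xi_2|^2=1+2\Re(\xi_2)$ and the computation $|v(\xi_2)|=1$ on $|\xi_2|=1$ are correct and useful, but to close the gap you would need to replace the growth-rate step by a finite-$n$ argument showing that every solution of $S_n(\xi_2)=S_n(-1-\xi_2)$ with $\xi_2\neq\xi_3$ actually falls into one of your three modulus configurations.
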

The proof is similar to the one given in [2, Lemma 2] by letting $\xi \mapsto \frac{1}{\xi}.$
\begin{remark} 
\end{remark}
Note that $t_1, t_2$ and $t_3$ are zeros of $D(t,z)= A(z)t^3+ B(z)t^2+1$. By Vieta's formulae we have $t_1t_2 + t_1t_3+ t_2t_3=0.$ Dividing by $t_1^2$ and letting $q_i=t_i/t_1$ and $q_i=1/\xi_i, ~~~~~~ i= 2, 3$ gives $q_2q_3 + q_2+ q_3=0$ if and only if $\xi_2 + \xi_3 + 1 = 0$, the first condition in Lemma \ref{kab1}. 
\begin{theorem}
Let $ \{P_n(z)\}$ be a polynomial sequence whose generating function is 
\begin{equation*}
 \sum_{n=0}^{\infty}P_n(z)t^n= \frac{1}{1+B(z)t^2+ A(z)t^3},
 \end{equation*}
 where $B(z)$ and $A(z)$ are polynomials in $z$ with complex coefficients. Given any $n \in \mathbb{N}$, those zeros $z$ of $P_n(z)$ for which $A(z)B(z) \neq 0$ lie on the curve defined by 
 \begin{equation*} 
\Im\left( \frac{B^3(z)}{A^2(z)}\right)=0  ~~~~~~~~  \mbox{and}~~~~~~~~~~ 
 0 \leq \Re \left(\frac{B^3(z)}{A^2(z)}\right) < \infty.
  \end{equation*}
\end{theorem}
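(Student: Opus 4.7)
The plan is to turn $P_n(z_0)=0$ into a symmetric relation in the reciprocal roots of $D(t,z_0):= A(z_0)t^3+B(z_0)t^2+1$, apply Lemma~\ref{kab1} to trap those roots on $\Gamma$, translate that geometric condition into two of $|t_1|,|t_2|,|t_3|$ coinciding, and extract the claim via the $q$-discriminant identity of Corollary~\ref{corr} together with Lemma~\ref{go2}. Fix a zero $z_0$ of $P_n$ with $a:=A(z_0)$ and $b:=B(z_0)$ both nonzero, let $t_1,t_2,t_3$ be the roots of $D(t,z_0)$, and set $s_i=1/t_i$, so that $s_1+s_2+s_3=0$ by Vieta applied to $s^3+bs+a$. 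I will first treat the generic case (distinct $t_i$) and handle the double-root case at the end.

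\textbf{Reducing $P_n(z_0)=0$ to Lemma~\ref{kab1}.} From the factorisation $D(t,z_0)=\prod_i(1-s_it)$ I read off $P_n(z_0)=h_n(s_1,s_2,s_3)$, which by Jacobi--Trudi equals $\sum_i s_i^{n+2}/\prod_{j\neq i}(s_i-s_j)$. Dividing by $s_1^{n+2}$ and setting $\xi_2=s_2/s_1$, $\xi_3=s_3/s_1$ (so that $1+\xi_2+\xi_3=0$), I clear the common denominator $(1-\xi_2)(1-\xi_3)(\xi_2-\xi_3)$ to rewrite $P_n(z_0)=0$ as
\[(\xi_2-\xi_3)\;-\;\xi_2^{n+2}(1-\xi_3)\;+\;\xi_3^{n+2}(1-\xi_2)=0,\]
which is the cross-multiplied form of $(\xi_2^{n+2}-1)/(\xi_2-1)=(\xi_3^{n+2}-1)/(\xi_3-1)$. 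This is precisely the hypothesis of Lemma~\ref{kab1} (applied with the lemma's integer parameter set to $n+1$).

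\textbf{Equal moduli and the $q$-discriminant.} Lemma~\ref{kab1} places $(\xi_2,\xi_3)$ on $C_1\cup C_2\cup C_3$. On $C_1$, $|\xi_2|=1$ gives $|t_1|=|t_2|$; on $C_2$, $|\xi_2+1|=1$ combined with $\xi_3=-\xi_2-1$ gives $|\xi_3|=1$ and hence $|t_1|=|t_3|$; on $C_3$, $\Re\xi_2=-1/2$ together with $\xi_3=-\xi_2-1$ forces $\xi_3=\overline{\xi_2}$, so $|t_2|=|t_3|$. Hence in every case there is a unimodular $q$ with $t_j=q\,t_i$ for some pair, so $Disc_t(D(t,z_0);q)=0$. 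Using $b\neq 0$, Corollary~\ref{corr} reduces this to
\[\frac{b^3}{a^2}\;=\;-\frac{(q^2+q+1)^3}{q^2(q+1)^2}\;=\;-\frac{(1+2\cos\theta)^3}{2(1+\cos\theta)},\qquad q=e^{i\theta},\]
which is real by Lemma~\ref{go2} (applied with $k=3$, $\ell=2$), proving $\Im(B^3(z_0)/A^2(z_0))=0$. The right-hand side is non-negative exactly when $\cos\theta\leq-1/2$, and finite unless $q=-1$; but $q=-1$ would make the vanishing identity read $-8a^2=0$, contradicting $a\neq 0$. A branch-wise check confirms $\cos\theta\leq-1/2$ in every case: on $C_1$, $q=\overline{\xi_2}$ and $\Re\xi_2\leq-1/2$ is built into the definition; $C_2$ is handled symmetrically after exchanging the roles of $\xi_2$ and $\xi_3$; on $C_3$, $q=\xi_2/\overline{\xi_2}$ has argument $2\arg\xi_2$ with $\arg\xi_2\in[\pi/2,2\pi/3]\cup[-2\pi/3,-\pi/2]$ (from $|y|\geq\sqrt{3}/2$), forcing $\cos(2\arg\xi_2)\in[-1,-1/2]$. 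Therefore $0\leq B^3(z_0)/A^2(z_0)<\infty$.

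\textbf{Degenerate case and main obstacle.} If $D(t,z_0)$ has a repeated root then $-27a^2-4b^3=0$, and Vieta together with $s_1+s_2+s_3=0$ force $s_1=s_2$ and $s_3=-2s_1$, so a direct evaluation of $h_n(s,s,-2s)=s^n\sum_{c=0}^n(n+1-c)(-2)^c$ yields $P_n(z_0)=s_1^n(4(-2)^n+3n+5)/9$; the exponential-versus-linear growth of the two summands shows this is nonzero for every $n\geq 2$, while $n=1$ is vacuous (as $P_1\equiv 0$ from the recurrence). Thus no such $z_0$ can arise. I expect the main obstacles to be the algebraic bookkeeping of Step~1 --- matching the expansion of $h_n(s_1,s_2,s_3)=0$ against the Lemma~\ref{kab1} hypothesis after the substitution $\xi_i=s_i/s_1$ --- and the branch-by-branch tracking of $\arg q$ in Step~3 needed to guarantee that the unimodular $q$ always lies in the arc $\Re q\leq-1/2$; the $q$-discriminant identity, Lemma~\ref{go2}, and the closed-form evaluation for the double-root case then plug in directly.
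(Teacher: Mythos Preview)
Your argument follows the paper's strategy --- reduce $P_n(z_0)=0$ to the hypothesis of Lemma~\ref{kab1}, then read off $B^3/A^2$ from the $q$-discriminant (Corollary~\ref{corr}) and analyse it on $\Gamma$ --- but your execution differs in two places worth noting. First, by passing to the reciprocal roots $s_i=1/t_i$ and $\xi_i=s_i/s_1$ from the outset, you bypass the paper's detour through $q_i=t_i/t_1$, $\xi_i=1/q_i$, and its separate verification that $\Gamma$ is invariant under the M\"obius inversion $z\mapsto 1/z$; your route lands on Lemma~\ref{kab1} in one step. Second, and more substantively, your treatment of the repeated-root case is sharper than the paper's: the paper's Case~1 records only that $B^3/A^2=-27/4\in\mathbb{R}$, which in fact \emph{violates} the inequality $0\leq\Re(B^3/A^2)$, whereas your explicit evaluation $P_n(z_0)=s_1^n(4(-2)^n+3n+5)/9$ shows that no such $z_0$ can be a zero of $P_n$ for $n\geq 2$, closing that gap. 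For the inequality constraint in the distinct-root case the paper takes a different tack --- it labels the $t_i$ by increasing modulus, invokes Theorem~\ref{xxx}(b) and Remark~\ref{label} to force $|q_2|=1$, and then computes the range of $F(\theta)=-(2\cos\theta+1)^3/(2\cos\theta+2)$ on $[2\pi/3,4\pi/3]$ --- while your branch-by-branch verification that the unimodular ratio $q$ always lies in the arc $\Re q\leq -1/2$ (and $q\neq -1$) reaches the same endpoint more directly and without appealing to the asymptotic machinery.
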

\begin{proof}
 Let $z_0$ be a zero of $P_n(z)$ such that $A(z_0)B(z_0)\neq 0$ and let $t_1=t_1(z_0),t_2=t_2(z_0), t_3=t_3(z_0)$ be the zeros of $D(t, z_0)= A(z_0)t^3+ B(z_0)t^2+1$. There are two cases we shall consider; namely, the case of repeated zeros and the case of distinct zeros of $D$.

 \textit{Case 1:} Suppose  $D(t,z_0)$ has repeated zeros. The ordinary discriminant of $D(t, z_0)$ is $$Disc_t(D(t,z_0))=-27A^2(z_0)-4B^3(z_0).$$
For repeated zeros,  $Disc_t(D(t,z_0))=0$ which is equivalent to 
 $$\frac{B^3(z_0)}{A^2(z_0)}= -\frac{27}{4} \in \mathbb{R}.$$  Therefore, the point $z_0$ lies on the curve given by $$\Im\left( \frac{B^3(z)}{A^2(z)}\right)=0. $$
 \textit{Case 2:} Suppose all the zeros of $D(t, z_0)$ are distinct, i.e., $Disc_t(D(t, z_0)) \neq 0.$
In this situation, let $t_1, t_2$ and $t_3$ be the three distinct zeros of $D(t,z)$. We first consider the distribution of quotients of zeros $q_i=t_i/t_1, 1\leq i \leq 3$. Then we examine the root distribution of $P_n(z)$ using \textit{q-}discriminants. We  show that these quotients $q_i=t_i/t_1, 1\leq i \leq 3$ lie on the curve in Fig. \ref{see1}.

 Let $z_0$ be a root of $P_{n}(z)$ which satisfies $A(z)B(z)\neq 0$ and let $t_1=t_1(z_0),t_2=t_2(z_0), t_3=t_3(z_0)$ be the zeros of $D(t, z_0)= A(z_0)t^3+ B(z_0)t^2+1$.\\
By partial fraction decomposition,  we have 
$$\frac{1}{1+B(z_0)t^2+ A(z_0)t^3}=\frac{1}{A(z_0)}\left( \frac{1}{(t-t_1)(t-t_2)(t-t_3)}\right)=  \frac{1}{A(z_0)}\sum_{i=1}^{3} \frac{1}{t-t_i}  \prod_{i \neq j} \frac{1}{t_i-t_j}.$$
By using geometric progression, we get 
\begin{align*}
\frac{1}{t-t_i}=-\frac{1}{t_i} \sum_{n=0}^{\infty} \frac{t^n}{t_i^n}= -\sum_{n=0}^{\infty} \frac{1}{t_i^{n+1}}t^n \hspace*{2cm} (|t|<|t_i|). \end{align*}
Hence we obtain
\begin{align}\label{flaq}
\frac{1}{D(t,z_0)}=-\frac{1}{A(z_0)}\sum_{n=0}^{\infty}\left(\sum_{i=1}^{3} \frac{1}{t_i^{n+1}}
\prod_{i \neq j} \frac{1}{t_i-t_j}\right)t^n.
\end{align} 
Comparing coefficients of $t^n$ in equations (\ref{uut}) and (\ref{flaq}) we obtain 
\begin{eqnarray*}\label{yu}
P_n(z_0) = -\frac{1}{A(z_0)}\sum_{i=1}^{3} \frac{1}{t_i^{n+1}}
\prod_{i \neq j} \frac{1}{t_i-t_j}.
\end{eqnarray*} 
Let $q_i=t_i/t_1, ~~~~~~ i= 2, 3$.  Now for any $n \geq 0$, the equation $P_{n}(z_0)=0$ is equivalent to 
 \begin{align}\label{uu}
  q_2^{n}q_3^{n}(q_2-q_3)- q_2^{n}(q_2-1)+ q_3^{n}(q_3-1)=0.\end{align}
Since $q_2-q_3= q_2-1 - q_3+1$  it follows that
  $$q_2^{n}q_3^{n}(q_2-q_3)= q_2^{n}q_3^{n}(q_2-1)- q_2^{n}q_3^{n}(q_3-1).$$ Therefore equation (\ref{uu})  can be rewritten as 
  $$q_2^{n}q_3^{n}(q_2-1)- q_2^{n}q_3^{n}(q_3-1)=q_2^{n}(q_2-1)-q_3^{n}(q_3-1)$$ and consequently since $q_2, q_3 \neq 0, 1$  we obtain 
   \begin{align}\label{uuu} \frac{ q_2^{n}-1}{q_2^n (q_2-1)}= \frac{ q_3^{n}-1}{q_3^n(q_3-1)}.\end{align}
 If we add $1$ on both sides of the equation \eqref{uuu} and set $q_i=1/\xi_i, ~~~~~~ i= 2, 3$ we obtain   [one side]
$$ \frac{q_2^{n}-1}{q_2^n(q_2-1)}+1=\frac{ q_2^{n+1}-1}{q_2^n (q_2-1)}= \frac{q_2^{n+1}(1-(1/q_2)^{n+1})}{q_2^{n+1} (1-1/q_2)}= \frac{\xi_2^{n+1}-1 }{\xi_2-1}.$$
Similarly on the other side of  equation \eqref{uuu}  we obtain  
$$ \frac{ q_3^{n}-1}{q_3^n (q_3-1)}+1= \frac{\xi_3^{n+1}-1 }{\xi_3-1}.$$
Thus  equation (\ref{uuu}) becomes
  \begin{equation*} \label{wan}
  \frac{\xi_2^{n+1}-1 }{\xi_2-1}= \frac{\xi_3^{n+1}-1 }{\xi_3-1},
 \end{equation*}
the second condition in Lemma \ref{kab1}. 
 
 Next, we need show that $\xi_i, i=2,3$ being on the curve $\Gamma$ implies that $q_i \in \Gamma$ as well. In particular, if $\xi_i \in C_1$ then the corresponding $q_i \in C_1$. If $\xi_i \in C_2$ then the corresponding $q_i \in C_3$ and for $\xi_i \in C_3$, the corresponding $q_i \in C_2.$ 
 To do this, it is enough to show that  $\Gamma$ is invariant under the Möbius inversion $z \mapsto \frac{1}{z}$. This is shown as follows.
  
\begin{enumerate}[(i)]\item For any $z=x+iy \in C_1$, we have $|z|=1$ with $\Re(z)\leq -\frac{1}{2}.$ In addition, $z \neq 0$ and so the image of $z$ under inversion $w=\frac{1}{z}$ is $w=\frac{1}{z}$.
Note that $|w|=|\frac{1}{z}|=\frac{1}{|z|}=1$.
Furthermore,  $\Re(w)=\Re( \frac{1}{z})=\frac{x}{x^2+y^2} \leq -\frac{1}{2}.$ Hence  $w \in C_1.$

\item  For any $z=x+iy \in C_2$, we have  $(x+1)^2+y^2=1$ where $x \geq -\frac{1}{2}$.
 Under the  inversion $w= \frac{1}{z}$, we have
 \begin{align*} w= \frac{1}{z}= \frac{\bar{z}}{|z|^2}=\frac{x-iy}{x^2+y^2}=\frac{x-iy}{(x+1)^2+y^2-2x-1}=
 -\frac{1}{2}+ \frac{y}{2x}i. \end{align*} 
Note that on  $C_2$,  $-\frac{1}{2} \leq x \leq 0$ and $-\frac{\sqrt{3}}{2} \leq y \leq \frac{\sqrt{3}}{2}.$ With these ranges on $x$ and $y$, we obtain $|\Im (w)|=|\frac{y}{2x}|\geq \frac{\sqrt{3}}{2}.$ Clearly  $\Re(w)= -\frac{1}{2}$.  Hence $w \in C_3.$

\item For any $z=x+iy \in C_3$, we have $z= -\frac{1}{2}+ y i$ where $|y|\geq \frac{\sqrt{3}}{2}.$ Clearly $z \neq 0$. So $|\bar{z}|=|z|.$
Under the  inversion $w= \frac{1}{z}$, we have
 \begin{align*} w= \frac{1}{z}= \frac{\bar{z}}{|z|^2}=\frac{x}{x^2+y^2}+ i \frac{-y}{x^2+y^2}=\frac{-1/2}{1/4+y^2}+ i \frac{-y}{1/4+y^2}.
  \end{align*} 
%
With the ranges of $x$ and $y$ for $z\in C_3$, we obtain 

\begin{align}  0 \geq \Re(w)= \frac{-1/2}{1/4+y^2}\geq  \frac{-1/2}{1/4+3/4} \geq -\frac{1}{2}.
 \end{align}
In addition, since $\Im (w)= \frac{-y}{1/4+y^2}$ and $|y|\geq \frac{\sqrt{3}}{2}$, we have  $0 \leq |\Im (w)|\leq \frac{\sqrt{3}}{2}$. Therefore  $w \in C_2$.

The conclusion that  $\Gamma$ is invariant under the Möbius inversion $z \mapsto \frac{1}{z}$ thus follows.
 Therefore $q_2$ and $q_3$ lie on the curve $\Gamma$ given in Lemma \ref{kab1}. The proof is complete.
\end{enumerate}   
 Now we know that $q_2$ and $q_3$ are given by the ratio of zeros of $D$ and are therefore the zeros of the discriminant. (Recall from Ismail \cite{Is} that the $q$-discriminant equals zero if there is a pair of zeros with the ratio $q$). From Corollary $\ref{corr}$, we know that the \textit{q}-discriminant of $D(t,z)=1+B(z)t^2 + A(z)t^3$ is given by $$ Disc_t(D(z,q))= \frac{B \left(A^2 \left(q^3-1\right)^3+B^3 (q-1) q^2 \left(q^2-1\right)^2\right)}{(1-q)^3}.$$ Since  $q_2$ and $q_3$ are the zeros of the discriminant then 
$$\frac{B \left(A^2 \left(q_i^3-1\right)^3+B^3 (q_i-1) q_i^2 \left(q_i^2-1\right)^2\right)}{(1-q_i)^3}=0 $$
 with $i=2,3$. Consequently since $q_i \neq 0,1$ we have
$$\frac{B^3(z)}{A^2(z)}= \frac{-(1+q_i+q_i^2)^3}{q_i^2(1+q_i)^2}.$$  
It remains to show that  
 $$f(q)= \frac{B^3(z)}{A^2(z)}= \frac{-(1+q+q^2)^3}{q^2(1+q)^2}$$ maps the curve $\Gamma$ to a real interval so that  we conclude $ \Im(f)=0. $
 
Now since $q$ lies on the curve in Fig. \ref{see1}, we have three possibilities below.

 \begin{enumerate}[(a)]
 \item $|q|=1$, and this corresponds to the points $\xi$ on the curve $C_1$.
 \item $q + \bar{q}= -1$, and this corresponds to the points $\xi$ on the curve $C_2$.
 \item $|-1-q| =1$, and this corresponds to the points $\xi$ on the curve $C_3$.
 \end{enumerate}
 
 For part (a) $$f({q})= -\frac{(1+{q}+{q}^2)^3}{{q}^2(1+{q})^2}=-\frac{q^3(\frac{1}{q}+1+{q})^3}{q^3(\frac{1}{q}+2+{q})} \in \mathbb{R} $$ since $\frac{1}{q}+q \in \mathbb{R}.$ Hence $ \Im(f)=0. $ \\
 
  For part (b), note that $ \bar{q}= -1-q.$ Therefore
 
 \begin{align*}
-\frac{(1+\bar{q}+\bar{q}^2)^3}{\bar{q}^2(1+\bar{q})^2}&= -\frac{(1+ (-1-q)+(-1-q)^2)^3}{(-1-q)^2(1+(-1-q))^2}\\ &=-\frac{(-q+2q+q^2+1)^3}{(-1-q)^2q^2}\\ &=-\frac{(1+q+q^2)^3}{q^2(1+q)^2}
\end{align*}
Consequently $f(\bar{q})=\overline{f(q)}=f(q)$, and we conclude that $ \Im(f)=0. $ \\
 
 For Part (c), $|-1-q| =1\Rightarrow \bar{q}= -\frac{q}{1+q}$.  Therefore 

\begin{align*}f(\bar{q})= -\frac{(1+\bar{q}+\bar{q}^2)^3}{\bar{q}^2(1+\bar{q})^2}= -\frac{(1+ \frac{-q}{1+q}+(\frac{-q}{1+q})^2)^3}{(\frac{-q}{1+q})^2(1+\frac{-q}{1+q})^2}=-\frac{(1+q+q^2)^3}{q^2(1+q)^2}= f(q).\end{align*}
Hence  $f(\bar{q})=\overline{f(q)}=f(q)$, and we conclude that $ \Im(f)=0. $

 The conclusion $ \Im(f)=0$ follows. Therefore, the point $z_0$ lies on the curve given by $$\Im \left( \frac{B^3(z)}{A^2(z)}\right)=0. $$

Next we prove the inequality constraint. From the recurrence relation with $\ell=2, k=3$  we have $$P_n(z)+B(z)P_{n-2}(z)+A(z)P_{n-3}(z)=0$$ whose characteristic equation is $t^3+ B(z)t + A(z)=0.$ The corresponding denominator of the generating function is $D(t,z)=A(z)t^3+ B(z)t^2 + 1.$ Suppressing $z,$ we can write $D(t)=At^3+ Bt^2 + 1.$ 

Let $t_1, t_2, t_3$ be the zeros of $D(t)$ labelled so that $0 < |t_1| \leq |t_2|\leq |t_3|$ and set $q_i:=t_i/t_1$ for  $i=1,2,3$. Note these are functions of $z$. Vieta's formulae give $t_1t_2+ t_1t_3+t_2t_3=0$. This implies that 
$q_3=-\frac{u}{1+u}$  where $u=q_2.$ If $z_0$ is a zero of $P_n(z)$ and $A(z_0)B(z_0)\neq 0$, then by Theorem \ref{xxx}(b) and  Remark \ref{label}, $q_1=1$, $|q_2|=|u|=1$ and $|q_3|\geq 1$.

So, we search $u$ such that the above conditions are satisfied. Let $u=e^{i\theta}$ where $\theta \in [0, 2\pi]$.
It follows from $|q_3|\geq 1$ that $1=|u| \geq |1+u| =2(1+ \cos \theta).$ This implies that $\theta \in \left[\frac{2\pi}{3} , \frac{4\pi}{3} \right].$ 

 Let $\Omega=\left\{ e^{i\theta} :\frac{2\pi}{3} \leq \theta \leq \frac{4\pi}{3}\right\}.$ 
Since $q \in \Omega \subset\Gamma$, it follows that 
\begin{eqnarray*}\label{tod1i}
f(q)=-\frac{(1+q+q^2)^3}{q^2(1+q)^2}.
\end{eqnarray*}

We now compute the range of $f$ on $\Omega$. To do this, we let $q= e^{i \theta}$ where $\theta \in [\frac{2\pi}{3} ,\frac{4\pi}{3}].$
After this parametrization we obtain
\begin{align*}
f(q)=F(\theta)=-\frac{(2 \cos(\theta)+1)^3}{2\cos(\theta)+2}.
\end{align*}
It is clear that  $F$ is well-defined and continuous on the union $[\frac{2\pi}{3},\pi)\cup (\pi, \frac{4\pi}{3}]$. Observe that 
for $\theta \in [\frac{2\pi}{3},\frac{4\pi}{3}]$, we have $-1 \leqslant 2\cos(\theta)+1 \leqslant 0$ and $0\leqslant 2\cos(\theta)+2 \leqslant 1$ hence $F(\theta)\geqslant 0$. Moreover, $F$ attains its minimal values equal to $0$ at $\theta=\frac{2\pi}{3}$ and at $\theta=\frac{4\pi}{3}$. 
Since $\lim_{\theta \to \pi}F(\theta)=+\infty$, it follows from continuity and non-negativity of $F$ that the range of $f$ is $\mathbb{R}_{\geqslant 0}$. 
This proves the inequality condition that $$ 0 \leq \Re \left(\frac{B^3(z)}{A^2(z)}\right) < \infty.$$

  \end{proof}
 
\medskip
Next we settle the specific case of Conjecture $1$ where $(k, \ell)=(4,3).$ We begin with the following lemma.
\begin{lemma}\label{kab}
 Let $z_0$ be a zero of $P_n(z)$ and $q = q(z_0)$ be a quotient of two zeros in $t$ of
$D(t,z_0)=A(z_0)t^4 + B(z_0)t^3 + 1.$ Then the set of all such quotients belongs to the curve depicted in Fig. 2 where the equation of the quartic curve $C_5$ (see the left part of Fig. 2) is
$$1 + 2x + 2x^2 + 2x^3 + x^4 - 2y^2 + 2xy^2 + 2x^2y^2 + y^4 = 0,$$
and the other curve $C_4$ (see the right part of Fig. 2) is the segment of the unit circle with real part at least $-1/3.$

 \begin{figure}
    \includegraphics[height=6cm, width=5cm]{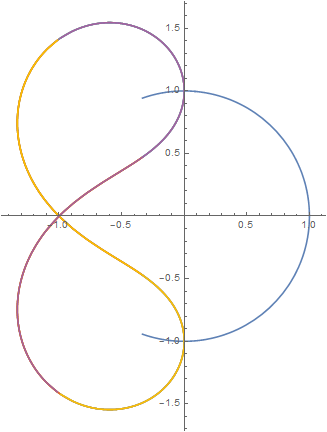}
     \caption{Distribution of the quotients of zeros of the $D(t)=At^4+ Bt^3+1$.}\label{seee1}
 \end{figure}
 \end{lemma}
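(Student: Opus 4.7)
The plan is to adapt the argument of Lemma~\ref{kab1} for the $(3,2)$ case, combining Vieta's formulas with the dominance criterion of Theorem~\ref{xxx} and Remark~\ref{label}. Denote the zeros of $D(t,z_0)$ by $t_1,t_2,t_3,t_4$ and set $q_i:=t_i/t_1$. Because the coefficients of $t^2$ and $t$ in $D$ both vanish, the elementary symmetric functions $e_2$ and $e_3$ of the $t_i$'s are zero, and after dividing by the appropriate powers of $t_1$ this becomes
\[
q_2+q_3+q_4+q_2q_3+q_2q_4+q_3q_4=0,\qquad q_2q_3+q_2q_4+q_3q_4+q_2q_3q_4=0.
\]
Subtracting yields the clean identity $q_2+q_3+q_4=q_2q_3q_4$, and a short check shows that $q_2,q_3,q_4$ are exactly the three roots of a depressed cubic of the form $x^3+c(x^2+x+1)=0$ for a single complex parameter $c=c(z_0)$. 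This is the replacement in our setting for the linear relation $\xi_2+\xi_3+1=0$ that was available in the $(3,2)$ case.

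\medskip

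Theorem~\ref{xxx} together with Remark~\ref{label} forces the two smallest-modulus zeros of $D(t,z_0)$ to coincide in modulus; call their ratio $q^*$, so $|q^*|=1$. Writing $q^*=e^{i\theta}$ and relabelling so that $q_2=q^*$, the identity $q_2^2+q_2+1=q_2(1+2\cos\theta)$ fixes $c$ and allows the explicit factorisation $x^3+c(x^2+x+1)=(x-q_2)(x^2-s\,x+p)$ with
\[
s\;=\;q_3+q_4\;=\;-\frac{1+e^{i\theta}}{1+2\cos\theta},\qquad p\;=\;q_3q_4\;=\;\frac{e^{i\theta}}{1+2\cos\theta}.
\]
The remaining two quotients $q_3,q_4$ are therefore explicit one-parameter functions of~$\theta$.

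\medskip

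To identify the two components, first observe that $C_4$ records the unit-modulus quotient $q^*$. Setting $r:=1+2\cos\theta=4\cos^2(\theta/2)-1$, the sign of $1-3\cos^2(\theta/2)$ (which vanishes exactly at $\cos\theta=-1/3$) controls whether $q_3,q_4$ form a complex-conjugate-like pair of equal modulus or split into distinct real multiples of $e^{i\theta/2}$; imposing that $(t_1,t_2)$ really is the smallest pair in modulus among all four roots yields exactly $\Re(q^*)\ge -1/3$. For $C_5$: writing $q_3=x+iy$, eliminate $q_4$ from the relations $q_3+q_4=s$, $q_3q_4=p$, and then eliminate $\theta$ using $1+2\cos\theta=4\cos^2(\theta/2)-1$ together with $\sin^2(\theta/2)=1-\cos^2(\theta/2)$. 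After clearing denominators and simplifying, one recovers the stated equation
\[
1+2x+2x^2+2x^3+x^4-2y^2+2xy^2+2x^2y^2+y^4=0.
\]

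\medskip

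The main obstacle is the explicit polynomial elimination producing $C_5$. A convenient reformulation uses the identity $s/p=-(1+1/q^*)$, which gives $q^*=-p/(s+p)$; the condition $|q^*|^2=1$ is then equivalent to the single real relation $|s|^2+2\Re(s\bar p)=0$. Translating this into a polynomial equation in $(x,y)=(\Re q_3,\Im q_3)$ via $q_4=s-q_3$ and $p=q_3(s-q_3)$, and then carrying out the algebraic simplification to recognise the stated quartic, is the technically delicate step. A useful consistency check, available throughout, is that $C_5$ is invariant under the involution $q\mapsto 1/q$ (as can be verified directly by substituting $(x',y')=(x/(x^2+y^2),\,-y/(x^2+y^2))$ into the defining polynomial), reflecting the symmetry $t_i/t_j\leftrightarrow t_j/t_i$ of the set of quotients.
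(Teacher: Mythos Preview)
Your overall strategy mirrors the paper's: use Vieta's relations to express $q_3+q_4$ and $q_3q_4$ in terms of $q_2$, set $q_2=e^{i\theta}$, and eliminate $\theta$ to obtain the quartic $C_5$. Your cubic framing $x^3+c(x^2+x+1)=0$ is a clean repackaging of the paper's two Vieta identities, and the formulas you derive for $s$ and $p$ agree with the paper's $q_3+q_4=-u(u+1)/(u^2+u+1)$, $q_3q_4=u^2/(u^2+u+1)$.

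The genuine gap is your justification of $|q_2|=1$. Theorem~\ref{xxx} and Remark~\ref{label} describe the \emph{limit set} $\liminf Z(P_m)=\limsup Z(P_m)$; they say nothing about an individual zero $z_0$ of a fixed $P_n$, and such a $z_0$ has no reason a priori to lie in that limit set. The paper does not rely on Theorem~\ref{xxx} here: it first carries out the partial-fraction expansion of $1/D(t,z_0)$ to obtain the explicit identity (the $24$-term relation in $q_2,q_3,q_4$) that encodes $P_n(z_0)=0$ for the given $n$. That identity, together with the Vieta constraints, is the input that pins the quotients to $C_4\cup C_5$ for every $n$, in the same spirit as Lemma~\ref{kab1} (which the paper attributes to Tran's Lemma~2). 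By discarding the $P_n(z_0)=0$ identity entirely you have removed the only piece of data that distinguishes an actual zero of $P_n$ from a generic point, so the conclusion $|q_2|=1$ is unsupported.

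A smaller issue: your ``convenient reformulation'' $|s|^2+2\Re(s\bar p)=0$ is tautological in your setup, since $s$ and $p$ are already written as functions of $\theta$ built from $q^*=e^{i\theta}$, so $|q^*|=1$ holds identically and yields no equation in $q_3$. To make it a genuine condition on $q_3=x+iy$ you would also need the relation coming from the cubic structure (equivalently $s^2+sp+p^2=p$) so that $s,p$ can be expressed through $q_3$ alone. The paper avoids this by substituting the explicit quadratic-formula solution of $G(q)=0$ and eliminating $\theta$ via computer algebra.
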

\begin{proof}
 Let $z_0$ be a zero of $P_n(z)$ such that $A(z_0)B(z_0)\neq 0$ and let $t_1=t_1(z_0),t_2=t_2(z_0), t_3=t_3(z_0)$ and $t_4=t_4(z_0)$ be the zeros of $D(t, z_0)= A(z_0)t^4+ B(z_0)t^3+1$.

By partial fraction decomposition,  we have 
$$\frac{1}{1+B(z_0)t^3+ A(z_0)t^4}=\frac{1}{A(z_0)}\left( \frac{1}{(t-t_1)(t-t_2)(t-t_3)(t-t_4)}\right)=  \frac{1}{A(z_0)}\sum_{i=1}^{4} \frac{1}{t-t_i}  \prod_{i \neq j} \frac{1}{t_i-t_j}.$$
By using geometric progression, we get 
\begin{align*}
\frac{1}{t-t_i}=-\frac{1}{t_i} \sum_{n=0}^{\infty} \frac{t^n}{t_i^n}= -\sum_{n=0}^{\infty} \frac{1}{t_i^{n+1}}t^n. \end{align*}
Hence we obtain
\begin{align}\label{fla}
\frac{1}{D(t,z_0)}=-\frac{1}{A(z_0)}\sum_{n=0}^{\infty}\left(\sum_{i=1}^{4} \frac{1}{t_i^{n+1}}
\prod_{i \neq j} \frac{1}{t_i-t_j}\right)t^n.
\end{align} 
Comparing coefficients of $t^n$ in Equation (\ref{uut}) and Equation (\ref{fla}) we obtain 
\begin{eqnarray*}\label{yu}
P_n(z_0) = -\frac{1}{A(z_0)}\sum_{i=1}^{4} \frac{1}{t_i^{n+1}}
\prod_{i \neq j} \frac{1}{t_i-t_j}.
\end{eqnarray*} 

Let $q_i=t_i/t_1, ~~~~~~ i= 2, 3, 4$.  Now for any $n \geq 0$, the equation $P_n(z_0)=0$ is equivalent to 
 $$
 q_2^{n+1} q_3^{n+3} q_4^{n+2}-q_2^{n+1} q_3^{n+2} q_4^{n+3}-q_2^{n+2} q_3^{n+3} q_4^{n+1}+q_2^{n+2} q_3^{n+1} q_4^{n+3}+q_2^{n+3} q_3^{n+2} q_4^{n+1}-q_2^{n+3} q_3^{n+1} q_4^{n+2}$$ $$ +q_2^{n+1} q_3^{n+2}-q_2^{n+1} q_3^{n+3}+q_2^{n+2} q_3^{n+3}+q_2^{n+3} q_3^{n+1}-q_2^{n+3} q_3^{n+2}-q_2^{n+2} q_3^{n+1}-q_2^{n+1} q_4^{n+2}+q_2^{n+1} q_4^{n+3}$$ $$+q_2^{n+2} q_4^{n+1}-q_2^{n+2} q_4^{n+3}-q_2^{n+3} q_4^{n+1}+q_2^{n+3} q_4^{n+2}-q_3^{n+2} q_4^{n+1}+q_3^{n+3} q_4^{n+1}+q_3^{n+1} q_4^{n+2}-q_3^{n+3} q_4^{n+2}$$ $-q_3^{n+1} q_4^{n+3}+q_3^{n+2} q_4^{n+3}=0.
$

Observe that since $D(t,z_0)= A(z_0)t^4+ B(z_0)t^3+1$, Vieta's formulae give
\begin{eqnarray} \label{oniic}  t_1t_2+t_1t_3+t_1t_4+t_2t_3+ t_2t_4+t_3t_4=0,
\end{eqnarray} and
\begin{eqnarray} \label{on22c}   t_1t_2t_3+t_1t_3t_4+t_1t_2t_4+t_2t_3t_4=0.
\end{eqnarray} 
Divide Equation (\ref{oniic}) by  $t_1^2$ and (\ref{on22c}) by $t_1^3$ and solve the resulting equations simultaneously to get $q_3$ and $q_4$  in terms of $q_2=u$ as follows.
Either 
\begin{eqnarray} \label{on3ii}  q_3= \frac{u(u+1) +i u\sqrt{3 u^2+3+2u}}{2 \left(u^2+u+1\right)}
\quad {\rm and}\quad  q_4= \frac{u(u+1)-iu\sqrt{3 u^2+3+2u}}{2 \left(u^2+u+1\right)},
\end{eqnarray} 
or 
\begin{eqnarray} \label{on322ii}  q_3= \frac{u(u+1) - iu\sqrt{3 u^2+3+2u}}{2 \left(u^2+u+1\right)}
\quad {\rm and}\quad  q_4=  \frac{u(u+1) +iu\sqrt{3u^2+3+2u}}{2 \left(u^2+u+1\right)}.
\end{eqnarray}
Observe that the product and sum of the roots are respectively
\begin{eqnarray} \label{on311} q_3q_4= \frac{u^2}{u^2+u+1}
\quad {\rm and}\quad  q_3+q_4= \frac{-u(u+1)}{u^2+u+1}.
\end{eqnarray} 
From (\ref{on311}), we have that $q_3$ and $q_4$ are two roots of the equation 
\begin{eqnarray} \label{cc}
G(q):= q^2 + \frac{u(u+1)}{u^2+u+1}q+ \frac{u^2}{u^2+u+1}=0.
\end{eqnarray} 
Let $u=e^{i\theta}$ be a point on the unit circle such that $-1/3 \leq \cos \theta \leq 1 $. The quadratic formula    (\ref{cc}) thus gives 
\begin{eqnarray} \label{bw}
q= \frac{1}{2}\left( -1+\frac{e^{-i\theta}}{2\cos \theta +1} \pm \frac{i e^{-i\theta /2}\sqrt{6 \cos \theta +2 }}{2\cos \theta +1}  \right).
\end{eqnarray} 
Splitting the real and imaginary parts of Equation (\ref{bw}), using Mathematica we obtain that $G$ maps the interval 
$-\frac{1}{3} \leq \cos \theta \leq 1$ to the quartic curve 

\begin{eqnarray*} \label{bwi}
1 + 2x + 2x^2 + 2x^3 + x^4 - 2y^2 + 2xy^2 + 2x^2y^2 + y^4 = 0.
\end{eqnarray*}
\end{proof}
\begin{theorem}
Let $ \{P_n(z)\}$ be a polynomial sequence whose generating function is 
\begin{equation*}
 \sum_{n=0}^{\infty}P_n(z)t^n= \frac{1}{1+B(z)t^3+ A(z)t^4},
 \end{equation*}
 where $B(z)$ and $A(z)$ are polynomials in $z$ with complex coefficients. The zeros of $P_n(z)$ for all $n$ which satisfy $A(z)B(z) \neq 0$ lie  on the  real algebraic curve  defined by
 \begin{equation*} \label{im}
\Im\left( \frac{B^4(z)}{A^3(z)}\right)=0 ~~~~~~~~  \mbox{and}~~~~~~~~ 
 0 \leq \Re \left(\frac{B^4(z)}{A^3(z)}\right) < \infty.
  \end{equation*}

\end{theorem}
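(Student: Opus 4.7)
The strategy mirrors the $(k,\ell)=(3,2)$ case. For a zero $z_0$ of $P_n(z)$ with $A(z_0)B(z_0) \neq 0$, I split according to whether the polynomial $D(t,z_0) := A(z_0)t^4 + B(z_0)t^3 + 1$ has repeated or distinct zeros. In the repeated-zero case, the classical quartic discriminant specialises to $Disc_t\, D(t,z_0) = 256A^3(z_0) - 27B^4(z_0)$, so its vanishing forces $B^4(z_0)/A^3(z_0) = 256/27 \in [0,\infty)$, as required.

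For the distinct-zero case, partial-fraction expansion of $1/D(t,z_0)$ and coefficient comparison with the generating function express $P_n(z_0)$ as a weighted sum over the four zeros $t_1,\dots,t_4$ of $D(t,z_0)$. Lemma~\ref{kab} then places every ratio $q = t_i/t_j$ of zeros on the union $C_4 \cup C_5$ of Figure~\ref{seee1}. Applying Theorem~\ref{ipt1} with $k=4,\ell=3$ and invoking the vanishing of the $q$-discriminant at such a ratio, the identities $1-q^4 = (1-q)(1+q+q^2+q^3)$ and $1-q^3 = (1-q)(1+q+q^2)$ reduce the condition to
$$\frac{B^4(z_0)}{A^3(z_0)} = \frac{(1+q+q^2+q^3)^4}{q^3(1+q+q^2)^3} =: f(q).$$

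It remains to show that $f$ maps $C_4 \cup C_5$ into $[0,\infty)$. On $C_4 = \{e^{i\theta}:\cos\theta \geq -1/3\}$, the substitution $q = e^{i\theta}$ together with $(1+q)(1+q^2) = 4\cos(\theta/2)\cos(\theta)e^{3i\theta/2}$ and $1+q+q^2 = (1+2\cos\theta)e^{i\theta}$ simplifies $f$ to
$$f(e^{i\theta}) = \frac{256\cos^4(\theta/2)\cos^4(\theta)}{(1+2\cos\theta)^3},$$
whose denominator is strictly positive on the admissible range (the singularity $\cos\theta=-1/2$ lies outside it) and whose numerator is non-negative. For $q \in C_5$, I use the parametric construction from the proof of Lemma~\ref{kab}: any such $q$ is a root of the quadratic $G$ associated to some $u \in C_4$, and the four numbers $1, u, q_3, q_4$ (with $q_3, q_4$ the two roots of $G$) satisfy the Vieta relations needed to be the ratios $t_i/t_1$ of zeros of an auxiliary polynomial $\tilde{A}t^4+\tilde{B}t^3+1$ reconstructed from them. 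Theorem~\ref{ipt1} applied to this auxiliary polynomial at both $u$ and $q$ then yields $f(q) = \tilde{B}^4/\tilde{A}^3 = f(u)$, and the right-hand side lies in $[0,\infty)$ by the $C_4$ analysis.

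The principal obstacle is the treatment of $C_5$: its quartic defining equation does not permit the direct parametric substitution that makes the $C_4$ case transparent. A purely algebraic verification using the identity $|1+q+q^2|^2 = |q|^2$ (which follows immediately from the $C_5$ equation) is conceivable but would require matching the arguments of several non-trivial factors; the parametric reduction to $C_4$ via Lemma~\ref{kab} sidesteps this and simultaneously extracts the non-negativity on $C_5$ from the corresponding bound on $C_4$.
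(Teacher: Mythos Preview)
Your proposal is correct and follows the paper's architecture closely: the same repeated/distinct case split, the same appeal to Lemma~\ref{kab} for the location of the ratios, the same $q$-discriminant identity from Theorem~\ref{ipt1}, and the same reduction $f(q)=f(u)$ from $C_5$ back to $C_4$. Two points of execution differ. First, the paper establishes $f(q)=f(u)$ by an explicit algebraic chain using the identity $q^3(1+u+u^2)=u^3(1+q+q^2)$ obtained by multiplying the quadratic relation $G(q)=0$ by $u-q$; your route via an auxiliary polynomial $\tilde A t^4+\tilde B t^3+1$ whose zero-ratios are $1,u,q_3,q_4$ is a conceptual repackaging of exactly the same relation (the existence of $\tilde A,\tilde B$ is guaranteed because the Vieta constraints $e_2=e_3=0$ hold by construction of $G$). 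Second, and more substantively, the paper separates $\Im f=0$ (via Lemma~\ref{go2}) from the inequality $\Re f\ge 0$, handling the latter by invoking the ordering $|t_1|\le|t_2|$ and restricting $u$ to a subarc $\Omega$ of $C_4$; your explicit formula
\[
f(e^{i\theta})=\frac{256\cos^4(\theta/2)\cos^4\theta}{(1+2\cos\theta)^3}
\]
delivers both reality and non-negativity on all of $C_4$ in one stroke (since $1+2\cos\theta\ge 1/3$ there), making Lemma~\ref{go2} and the subarc analysis unnecessary. This is a genuine streamlining of the paper's argument for the inequality constraint.
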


 \begin{proof}
 Let $z_0$ be a zero of $P_n(z)$ such that $A(z_0)B(z_0)\neq 0$ and let $t_1=t_1(z_0), t_2=t_2(z_0), t_3=t_3(z_0), t_4=t_4(z_0)$ be the zeros of $D(t, z_0)= A(z_0)t^4+ B(z_0)t^3+1$. As above, we consider two cases. 
 
 \textit{Case 1:} Suppose  $D(t,z_0)$ has repeated zeros. The ordinary discriminant of $D(t, z_0)$ is $$Disc_t(D(t,z_0))=4^4A^3(z_0)-3^3B^4(z_0).$$
For repeated zeros,  $Disc_t(D(t,z_0))=0$ which is equivalent to 
 $$\frac{B^4(z_0)}{A^3(z_0)}= \frac{4^4}{3^3}=\frac{256}{27} \in \mathbb{R}.$$ Therefore, the point $z_0$ lies on the curve given by 
 
 \begin{equation*} 
\Im\left( \frac{B^4(z)}{A^3(z)}\right)=0 ~~~~~~~~~~~~~~  \mbox{and}~~~~~~~~~~~~~~ 
 \Re \left(\frac{B^4(z)}{A^3(z)}\right) > 0.
  \end{equation*}
  
\textit{Case 2:} Suppose all the zeros of $D(t, z_0)$ are distinct, i.e., $Disc_t(D(t,z_0)) \neq 0.$
  Let $t_1, t_2,t_3$ and $t_4$ be the  distinct zeros of $D(t,z)$. As above, we  consider the distribution of quotients of zeros $q_i=t_i/t_1, 1\leq i \leq 4$. Then we examine the root distribution of $P_n(z)$ using \textit{q-}discriminants and show that the quotient of zeros $q_i=t_i/t_1, 1\leq i \leq 4$ lie on the curve given in Fig. \ref{seee1}.
 
 By Theorem \ref{ipt1},  the \textit{q-}discriminant of $D(t,z)=1+B(z)t^3 + A(z)t^4$ is given by
 \begin{align*} 
Disc_t(D(t);q)= \frac{B^2 \left(A^3 \left(q^4-1\right)^4-B^4 (q-1) q^3 \left(q^3-1\right)^3\right)}{(q-1)^4}.
 \end{align*}
If $q$ is a quotient of two distinct zeros of $D(t,z_0)=1+B(z_0)t^3 + A(z_0)t^4$ then 
  $Disc_t(D(t);q)=0$ which implies $$ \frac{B^2 \left(A^3 \left(q^4-1\right)^4-B^4 (q-1) q^3 \left(q^3-1\right)^3\right)}{(q-1)^4}=0. $$
Since $q \neq 0,1$, we have

\begin{align} \label{mmayy}
\frac{B^4(z_0)}{A^3(z_0)}= \frac{(q^4-1)^4}{(q^3-1)^3(q^3-q^4)}=:f(q)
\end{align}
 It remains to show that for $q$ on the curve depicted in Fig. \ref{seee1}, we have $ \Im(f(q))=0$. 
To do this, let $u=e^{i\theta}$ be a point on a unit circle with $-\frac{1}{3} \leq \cos \theta \leq 1$. Then $u \in C_4$. To each $u$, there are two possible values of $q$. Moreover,  $u\neq q$ since this would imply that $u=-1\pm i \sqrt{2}$ contradicting that $u$ is a point on the segment of the unit circle with real part at least $-1/3$.
Now, let $u \in C_4$ with the property that $q(u) \in C_5$.  Then $q$ and $u$ are related by the equation

\begin{eqnarray} \label{may}
q^2 + \frac{u(u+1)}{u^2+u+1}q+ \frac{u^2}{u^2+u+1}=0. 
\end{eqnarray}
Multiplying Equation (\ref{may}) by $u- q$, we obtain
\begin{align} \label{mma}
q^3 + q^3 u + q^3 u^2 = u^3 + q u^3 + q^2 u^3.
\end{align}
From (\ref{mmayy}) and the identity (\ref{mma}) we show that $f(q)=f(u)$ as follows.
\begin{align*} f(q)&=\frac{(q^4-1)^4}{(q^3-1)^3(q^3-q^4)}=- \frac{(q+1)^4(q^2+1)^4}{(q^2+q+1)^3q^3}=- \frac{(q+1)^4(q^2+1)^4u^{12}}{q^{12}(u^2+u+1)^3u^3}\\& =- \frac{(1+1/q)^4(1+1/q^2)^4u^{12}}{(u^2+u+1)^3u^3}= \frac{(u^3(1+1/q)(1+1/q^2))^4(u-1)^4}{(u^3-1)^3(u^3-u^4)} \\&= \frac{(u^3(1+1/q+1/q^2+1/q^3))^4(u-1)^4}{(u^3-1)^3(u^3-u^4)}= \frac{(u^3(1+1/u+1/u^2+1/u^3))^4(u-1)^4}{(u^3-1)^3(u^3-u^4)} \\&= \frac{(1+u+u^2+u^3)^4(u-1)^4}{(u^3-1)^3(u^3-u^4)}=\frac{(u^4-1)^4}{(u^3-1)^3(u^3-u^4)}=f(u).
\end{align*}
Since $u \in C_4$, it implies $|u|=1$. Lemma \ref{go2} then gives $\Im(f(u))=\Im(f(q))=0 $. The conclusion $\Im(f)= 0$ thus follows. 
Therefore, the point $z_0$ lies on the curve given by $$\Im \left( \frac{B^3(z)}{A^2(z)}\right)=0. $$
  
Next we prove the inequality constraint.  From the recurrence relation with $\ell= 3, k = 4$ we have
 $$P_n(z)+B(z)P_{n-3}(z)+A(z)P_{n-4}(z)=0$$ whose characteristic equation is $t^4+ B(z)t + A(z)=0.$ The corresponding denominator of the generating function is $D(t,z)=A(z)t^4+ B(z)t^3 + 1.$ Suppressing $z,$ we can write $D(t)=At^4+ Bt^3 + 1.$

Let $t_1, t_2, t_3, t_4$ be the zeros of $D(t)$ labelled so that $0<|t_1| \leq |t_2|\leq |t_3|\leq |t_4|$ and set $q_i:=t_i/t_1$ for  $i=1,2,3,4$. Note these are functions of $z$.  

From the Equations (\ref{on3ii}) and (\ref{on322ii}), it is enough to consider only one pair of solutions $(q_3, q_4)$ since the other pair follows similarly and give the same results. 

If $z_0$ is a zero of $P_n(z)$ and $A(z_0)B(z_0)\neq 0$, then $q_1=1$, $|u|=1$, $|q_3|\geq 1$ and $|q_4|\geq 1.$
So, we search $u$ such that the above conditions are satisfied. Set $u=e^{i\theta}$ where $\theta \in [0, 2\pi]$.
It follows from $|q_3|\geq 1$ that 
\begin{align*}
 \left|\frac{u}{u^2+u+1}\right| \left| u+1+ i\sqrt{3 u^2+3+2u}\right|  \geq 2,
  \end{align*}
  which becomes
\begin{eqnarray}\label{laaila}
\left| 1+ q^{i\theta} + i\sqrt{(3q^{2i\theta}+3+2q^{i\theta})} \right| \geq \left|2 (q^{2i\theta}+ q^{i\theta}+1)\right|.
\end{eqnarray}
Simplifying (\ref{laaila}) gives
\begin{eqnarray}\label{lab1i}
(2 \cos \theta +1)\cos \theta \leq 0.
\end{eqnarray}
Solving  (\ref{lab1i}) gives $\theta \in [\frac{\pi}{2} , \frac{2\pi}{3} ] \cup [\frac{4\pi}{3}, \frac{3\pi}{2}].$ 

Similarly, for $|q_4|\geqslant 1$,  we get $\theta \in [\frac{\pi}{2} , \frac{2\pi}{3} ] \cup [\frac{4\pi}{3}, \frac{3\pi}{2}].$ 
 We thus conclude that both $|q_3|\geq 1$ and $|q_4|\geq 1$ only when $\theta \in [\frac{\pi}{2} , \frac{2\pi}{3} ] \cup [\frac{4\pi}{3}, \frac{3\pi}{2}].$ 
 
Let $\Omega= \{e^{i\theta}:\theta \in [\frac{\pi}{2} , \frac{2\pi}{3} ] \cup [\frac{4\pi}{3}, \frac{3\pi}{2}]\}.$ From
\begin{eqnarray*}\label{tod1i}
f(q)=\frac{B^4(z)}{A^3(z)}=\frac{(q^4-1)^4}{(q^3-1)^3(q^3-q^4)}.
\end{eqnarray*}
If $q \in \Omega$, it follows that 
\begin{eqnarray*}\label{tod1i}
f(q)=\frac{(1+q+q^2+q^3)^4}{q^3(1+q+q^2)^3}
\end{eqnarray*}

We now compute the range of $f$ on $\Omega$. To do this, we set $q= e^{i \theta}$ where $\theta \in [\frac{\pi}{2} , \frac{2\pi}{3}] \cup [\frac{4\pi}{3}, \frac{3\pi}{2} ].$
After this parametrization we obtain
\begin{align*}
f(q)=F(\theta)=\frac{(\cos (4 \theta)-1)^2}{(\cos (3 \theta)-1)(\cos (2 \theta)-\cos \theta)}.
\end{align*}
Its clear that  $F$ is well-defined and continuous on the union $[\frac{\pi}{2},\frac{2\pi}{3} )\cup ( \frac{4\pi}{3}, \frac{3\pi}{2}]$. For $\theta \in \Omega$, we obtain  $0 \leqslant (\cos (4 \theta)-1)^2 \leqslant \frac{9}{4},\quad  -1\leqslant (\cos (3 \theta)-1) \leqslant 0\quad \mbox{and} \quad -1\leqslant (\cos (2 \theta)-\cos \theta)\leqslant 0.$
Hence $F(\theta)\geqslant 0$. Moreover, $F$ attains its minimum value of $0$ at $\theta = \frac{\pi}{2}$ and $\theta = \frac{3\pi}{2}$.
Since $\lim_{\theta \to \frac{2\pi}{3}^-}F(\theta)=+\infty= \lim_{\theta \to \frac{4\pi}{3}^+}f(\theta)$, it follows from continuity and non-negativity of $F$ that the range of $f$ is $\mathbb{R}_{\geqslant 0}$. 
This proves the inequality condition that $$ 0 \leq \Re \left(\frac{B^4(z)}{A^3(z)}\right) < \infty.$$ 
\end{proof}

\section{Examples}\label{BIII}
In this section we present several concrete examples using numerical experiments. In these examples, we consider the sequence of polynomials $\{P_n\}_{n=0}^{\infty}$ generated by the rational function 
\begin{equation*}\label{uuuu}
 \sum_{n=0}^{\infty}P_n(z)t^n= \frac{1}{1+B(z)t^\ell+ A(z)t^k},
 \end{equation*}
where $A(z)$ and $B(z)$ are arbitrary complex polynomials. We plot a portion of the curve  given by 
$$\Im \left(\frac{B^k(z)}{A^\ell(z)}\right)=0.$$ On each graph, we plot the zeros of one of the polynomials (of our choice) in the polynomial sequence described by the given parameters $k, \ell, A(z)$ and  $B(z)$.
\begin{example} For $k=3, \ell=2, A(z)=z+5$ and  $B(z)=-z^2 + 2 z + 5$, we obtain Fig. \ref{a1} and Fig. \ref{a2}. In Fig. \ref{a1}, we plot the zeros of $P_{30}(z)$ while in  Fig. \ref{a2}, we plot the zeros of $P_{70}(z).$
\begin{figure}[!htb]
   \begin{minipage}{0.48\textwidth}
     \centering
     \includegraphics[height=6.5cm, width=6.5cm]{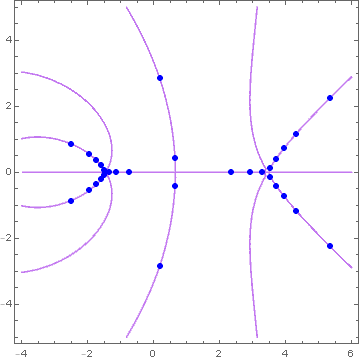}
     \caption{The graph of $\Im(B^3(z)/A^2(z))=0$ and the zeros of $P_{30}(z)$.}\label{a1}
   \end{minipage}\hfill
   \begin{minipage}{0.48\textwidth}
     \centering
     \includegraphics[height=6.5cm, width=6.5cm]{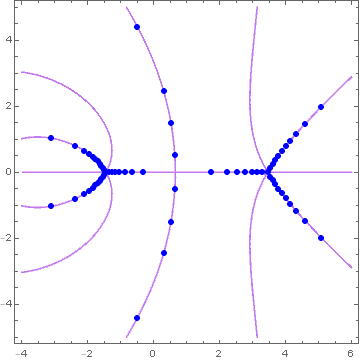}
   \caption{The graph of $\Im(B^3(z)/A^2(z))=0$ and the zeros of $P_{70}(z)$.}\label{a2}
   \end{minipage}
\end{figure}
\end{example}
\begin{example} For $k=3, \ell=2, A(z)=z^3 - z + 6$ and  $B(z)=-z^2 + 7 z - 5$, we obtain Fig. \ref{as1} and 
Fig. \ref{as2}. In Fig.  \ref{as1}, we plot the zeros of $P_{120}(z)$ while in  Fig.  \ref{as2}, we plot the zeros of $P_{200}(z).$
\begin{figure}[!htb]
   \begin{minipage}{0.48\textwidth}
     \centering
     \includegraphics[height=6.5cm, width=6.5cm]{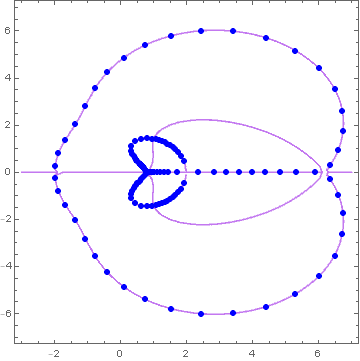}
     \caption{The graph of $\Im(B^3(z)/A^2(z))=0$ and the zeros of $P_{120}(z)$.}\label{as1}
   \end{minipage}\hfill
   \begin{minipage}{0.48\textwidth}
     \centering
     \includegraphics[height=6.5cm, width=6.5cm]{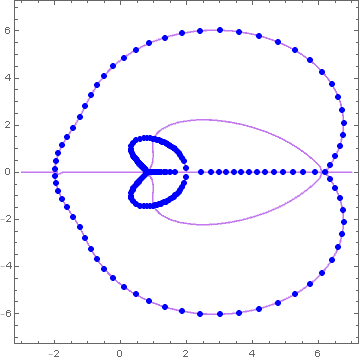}
   \caption{The graph of $\Im(B^3(z)/A^2(z))=0$ and the zeros of $P_{200}(z)$.}\label{as2}
   \end{minipage}
\end{figure}
\end{example}
\begin{example} For $k=4, \ell=3, A(z)=z^2+1$ and  $B(z)=z^3-1$, we obtain  Fig. \ref{figg1} and Fig. \ref{Figg2}. In Fig. \ref{figg1}, we plot the zeros of $P_{40}(z)$ while in  Fig. \ref{Figg2}, we plot the zeros of $P_{70}(z).$
\begin{figure}[!htb]
   \begin{minipage}{0.48\textwidth}
     \centering
     \includegraphics[height=6.5cm, width=6.5cm]{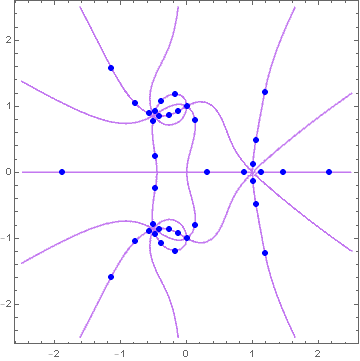}
     \caption{The graph of $\Im(B^4(z)/A^3(z))=0$ and the zeros of $P_{40}(z)$.}\label{figg1}
   \end{minipage}\hfill
   \begin{minipage}{0.48\textwidth}
     \centering
     \includegraphics[height=6.5cm, width=6.5cm]{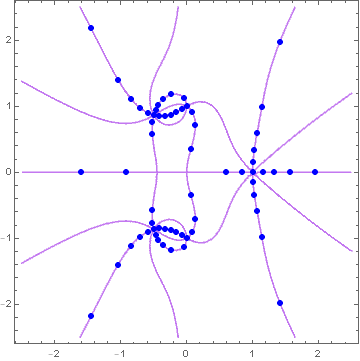}
     \caption{The graph of $\Im(B^4(z)/A^3(z))=0$ and the zeros of $P_{70}(z)$.}\label{Figg2}
   \end{minipage}
\end{figure}
\end{example}
\begin{example} For $k=4, \ell=3, A(z)=7 z^5 - 2 z + i$ and  $B(z)=-z^2 - 2 z + 5$, we obtain  Fig. \ref{fikgg1} and Fig. \ref{Fikgg2}. In Fig. \ref{fikgg1}, we plot the zeros of $P_{50}(z)$ while in  Fig. \ref{Fikgg2}, we plot the zeros of $P_{150}(z).$
\begin{figure}[!htb]
   \begin{minipage}{0.48\textwidth}
     \centering
     \includegraphics[height=6.5cm, width=6.5cm]{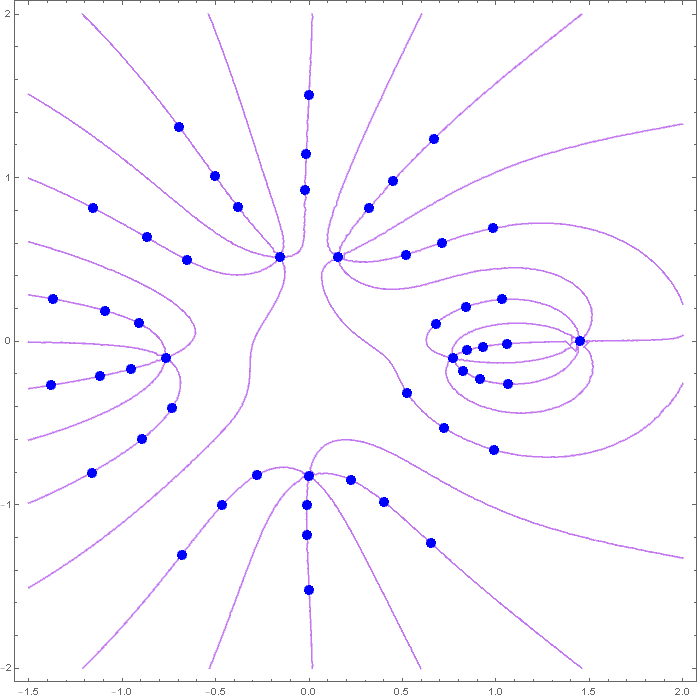}
     \caption{The graph of $\Im(B^4(z)/A^3(z))=0$ and the zeros of $P_{50}(z)$.}\label{fikgg1}
   \end{minipage}\hfill
   \begin{minipage}{0.48\textwidth}
     \centering
     \includegraphics[height=6.5cm, width=6.5cm]{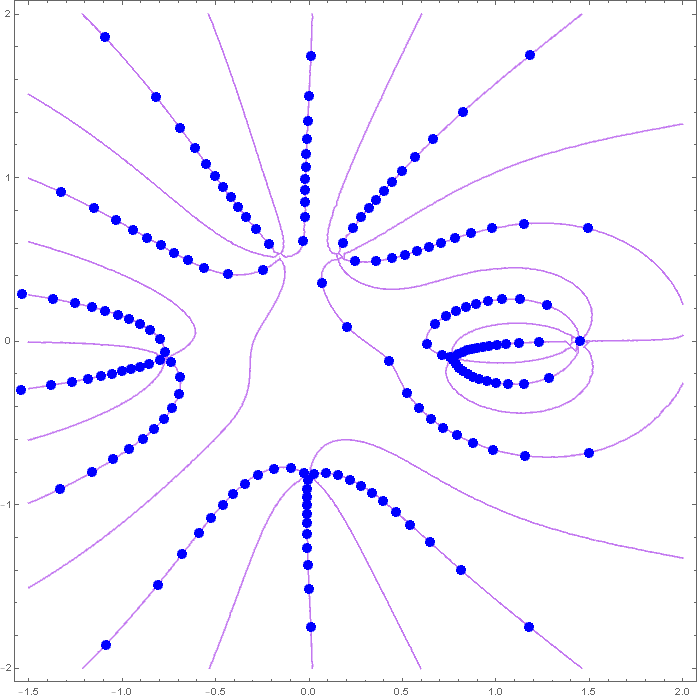}
     \caption{The graph of $\Im(B^4(z)/A^3(z))=0$ and the zeros of $P_{150}(z)$.}\label{Fikgg2}
   \end{minipage}
\end{figure}
\end{example}
We end this paper with the following conjecture which we obtain based on numerical experiments.
\newpage
\vspace*{2cm}
\begin{conjecture} \label{Inn} In the notation of Conjecture \ref{conj:Shapiro} with the condition  $\ell>1,$
we have the following.
\begin{enumerate}
[(a)]
\item If $k$ is even, then $$ 0 \leq \Re \left(\frac{B^k(z)}{A^\ell(z)}\right) < \infty.$$

\item If $k$ is odd, then

$$ 0 \leq - \Re \left(\frac{B^k(z)}{A^\ell(z)}\right) < \infty \quad {\rm for} \quad \ell  \quad {\rm odd}                $$
and 
$$ 0 \leq \Re \left(\frac{B^k(z)}{A^\ell(z)}\right) < \infty \quad {\rm for} \quad \ell  \quad {\rm even.}                $$
\end{enumerate}

\end{conjecture}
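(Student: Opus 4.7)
The plan is to mirror the two cases settled in Section~\ref{BII}: present $B^k(z_0)/A^\ell(z_0)$ as a function of a single unit-modulus parameter, deduce realness from Lemma~\ref{go2}, and determine the sign by elementary trigonometry. Fix $z_0$ with $A(z_0)B(z_0)\neq 0$ and $n$ large. Partial-fraction expansion of $1/D(t,z_0)$ combined with Theorem~\ref{xxx} and Remark~\ref{label} forces the two smallest-modulus roots $t_1,t_2$ of $D(t,z_0)=A(z_0)t^k+B(z_0)t^\ell+1$ to have equal modulus, so $q:=t_2/t_1=e^{i\theta}$ lies on the unit circle. Since $q$ is a ratio of zeros of $D(t,z_0)$, Theorem~\ref{ipt1} forces $\mathrm{Disc}_t(D(t);q)=0$; after cancelling the nonvanishing factor $W=A^{k-\ell-1}B^{\ell-1}(1-q)^{-k}$ one gets
\begin{equation*}
\frac{B^k(z_0)}{A^\ell(z_0)} \;=\; \frac{(q^k-1)^k}{(1-q^\ell)^\ell(q^\ell-q^k)^{k-\ell}} \;=\; (-1)^k\, h(q),
\end{equation*}
with $h$ as in Lemma~\ref{go2}. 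That lemma yields $h(q)\in\mathbb{R}$, and the substitution $1-e^{is\varphi}=-2i e^{is\varphi/2}\sin(s\varphi/2)$ from its proof gives the explicit real form
\begin{equation*}
\frac{B^k(z_0)}{A^\ell(z_0)} \;=\; (-1)^k\,\frac{\sin^k(k\theta/2)}{\sin^\ell(\ell\theta/2)\,\sin^{k-\ell}((k-\ell)\theta/2)}.
\end{equation*}

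Because $\gcd(k,\ell)=1$, exactly one of $k,\ell,k-\ell$ is even, and parity bookkeeping of the three sine-powers produces three cases: when $k$ is even the sign of the right-hand side equals $\mathrm{sgn}(\sin(\ell\theta/2)\sin((k-\ell)\theta/2))$; when $k$ and $\ell$ are both odd it equals $-\mathrm{sgn}(\sin(k\theta/2)/\sin(\ell\theta/2))$; when $k$ is odd and $\ell$ is even it equals $-\mathrm{sgn}(\sin(k\theta/2)/\sin((k-\ell)\theta/2))$. These match cases (a), (b) with $\ell$ odd, and (b) with $\ell$ even of Conjecture~\ref{Inn}, respectively. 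The conjecture is therefore equivalent to showing that on the admissible set of $\theta$ the corresponding sine-ratio has the predicted sign and that $|B^k/A^\ell|$ sweeps the full range $[0,\infty)$.

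Identifying this admissible set is the main obstacle. It is $\{\theta:|q_j(\theta)|\ge 1,\ j=3,\dots,k\}$, where $q_3,\dots,q_k$ are the remaining roots of an auxiliary trinomial $\tilde D(t)=t^k+Ct^\ell-1-C$ with $C=(u^k-1)/(1-u^\ell)$ and $u=e^{i\theta}$; this trinomial arises because the Vieta vanishings $e_i(1,u,q_3,\dots,q_k)=0$ for $i\in\{1,\dots,k-1\}\setminus\{k-\ell\}$ force $1,u,q_3,\dots,q_k$ to be the $k$ roots of a trinomial of the same shape as $D$. For the solved cases $(3,2)$ and $(4,3)$ the trinomial is explicitly solvable; for general coprime $(k,\ell)$ I would attempt an argument-principle (Rouch\'e-type) count tracking how the $q_j$ cross $|t|=1$ as $\theta$ traverses $[0,2\pi]$, isolating the admissible arcs and certifying the predicted sign on each component by continuity and elementary trigonometric inequalities. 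Blow-up at the endpoints of these arcs---where $\sin(\ell\theta/2)$ or $\sin((k-\ell)\theta/2)$ vanishes (this is exactly where the hypothesis $\ell>1$ enters, since for $\ell=1$ such interior blow-up endpoints are absent and one recovers the \emph{bounded} range in Tran's Conjecture~\ref{conj:Tran})---then supplies the full half-line. The chief difficulty is to prove uniformly in $(k,\ell)$ that the admissible arcs coincide with the ``right-sign'' components of the complement of the pole set of the sine-ratio, rather than only verifying it case-by-case.
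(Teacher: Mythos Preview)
The paper does not prove this statement: Conjecture~\ref{Inn} is presented as a conjecture ``based on numerical experiments,'' and the Final Remarks explicitly list its proof as open future work. There is therefore no paper proof to compare against.

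Your proposal is not a proof either, and you say as much. The reduction you perform---setting $\mathrm{Disc}_t(D(t);q)=0$ from Theorem~\ref{ipt1}, solving for $B^k/A^\ell$, and invoking Lemma~\ref{go2} to obtain the explicit trigonometric form
\[
\frac{B^k(z_0)}{A^\ell(z_0)} \;=\; (-1)^k\,\frac{\sin^k(k\theta/2)}{\sin^\ell(\ell\theta/2)\,\sin^{k-\ell}((k-\ell)\theta/2)}
\]
---is correct and is exactly the natural generalisation of what the paper does for $(3,2)$ and $(4,3)$. The parity bookkeeping is also right. But the substantive content of the conjecture is precisely the step you flag as the ``chief difficulty'': identifying the admissible set $\{\theta:|q_j(\theta)|\ge 1,\ j=3,\dots,k\}$ and showing it coincides with the components where the sine-ratio has the predicted sign. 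Your proposed Rouch\'e/argument-principle attack is plausible in spirit, but you have not carried it out, and there is no indication in the paper (or elsewhere) that it goes through uniformly in $(k,\ell)$. One should also be careful with the appeal to Theorem~\ref{xxx} and Remark~\ref{label}: those give an \emph{asymptotic} statement about $\liminf Z(P_n)$, whereas Conjecture~\ref{Inn} (like Conjecture~\ref{conj:Shapiro}) concerns \emph{every} zero of \emph{every} $P_n$, so the reduction to $|q|=1$ for all $n$ requires the kind of exact argument used in Section~\ref{BII}, not merely the limiting one.
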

\begin{remark} \end{remark}The conjecture excludes the case $\ell =1$  because these were settled \cite{T, TI}. It is important to note that in all those cases, the real part of rational function is bounded unlike in Conjecture $2$ where the real part is unbounded. 

\section{Final Remarks}
 \textbf{Problem:} Give a complete proof of Conjecture 1 as formulated by B. Shapiro. This  still remains unknown to the author and it should be a project for a future work. Additionally to give a complete proof of Conjecture 2 as formulated in this paper. 
\medskip 


{\bf Acknowledgements.}  I am sincerely grateful to my advisors Professor Boris Shapiro who introduced me to the problem and for many fruitful discussions surrounding it, Dr. Alex Samuel Bamunoba, Prof. Rikard B\o gvad, Dr.David Sseevviiri for their continuous discussions and excellent guidance. I acknowledge and appreciate the financial support from Sida Phase-IV bilateral program with Makerere University 2015-2020 under project 316 'Capacity building in mathematics and its applications'.

\end{document}